\def\exp{\mathop{\mbox{\textsf{exp}}}\nolimits}
\newcommand{\genstirlingI}[3]{%
  \genfrac{[}{]}{0pt}{#1}{#2}{#3}%
}
\newcommand{\genstirlingII}[3]{%
  \genfrac{\{}{\}}{0pt}{#1}{#2}{#3}%
}
\newcommand{\genauto}[3]{%
  \genfrac{\llbracket}{\rrbracket}{0pt}{#1}{#2}{#3}%
}
\newcommand{\stirlingfk}[2]{\genstirlingI{}{#1}{#2}}
\newcommand{\stirlingsk}[2]{\genstirlingII{}{#1}{#2}}
\newcommand{\cfauto}[2]{\genauto{}{#1}{#2}}
\newcommand{\C}{\mathbb{C}}
\newcommand{\R}{\mathbb{R}}
\newcommand{\x}{\mathbf{x}}
\newcommand{\y}{\mathbf{y}}
\newcommand{\gr}{\textsf{gr}}
\newcommand{\seqnum}[1]{\href{https://oeis.org/#1}{\rm\underline{#1}}}
\DeclareMathOperator{\sech}{sech}
\DeclareMathOperator{\arctanh}{arctanh}
\theoremstyle{plain}
\newtheorem{theorem}{Theorem}
\theoremstyle{definition}
\newtheorem{definition}[theorem]{Definition}
\newtheorem{proposition}{Proposition}
\newtheorem{corollary}{Corollary}
\newtheorem{conjecture}{Conjecture}
\begin{document}

\begin{center}
\vskip 1cm{\LARGE\bf Solution of the Differential Equation $y^{(k)}=e^{ay}$, Special Values of Bell Polynomials and $(k,a)$-Autonomous Coefficients}
\vskip 1cm
\Large
Ronald Orozco López\\
Departamento de Matemáticas \\
Universidad de los Andes\\
Bogotá, 111711\\
Colombia \\
\href{mailto:rj.orozco@uniandes.edu.co}{\tt rj.orozco@uniandes.edu.co} \\
\ \\
\end{center}

\newcommand{\Addresses}{{

  
}}


\begin{abstract}
In this paper special values of Bell polynomials are given by using the power series solution of the equation $y^{(k)}=e^{ay}$. In addition, complete and partial exponential autonomous functions, exponential autonomous polynomials, autonomous polynomials and $(k,a)$-autonomous coefficients are defined. Finally, we show the relationship between various numbers counting combinatorial objects and binomial coefficients, Stirling numbers of second kind and autonomous coefficients.
\end{abstract}

\section{Introduction}
It is a known fact that Bell polynomials are closely related to derivatives of composition of functions. For example, Faa di Bruno \cite{Faa}, Foissy \cite{Foissy}, and Riordan \cite{Riordan_j} showed that Bell polynomials are a very useful tool in mathematics to represent the $n$-th derivative of the composition of functions. Also, Bernardini and Ricci \cite{Bernardini}, Yildiz et al. \cite{Yildiz}, Caley \cite{Caley}, and Wang \cite{Wang} showed the relationship between Bell polynomials and differential equations. On the other hand, Orozco \cite{Orozco} studied the convergence of the analytic solution of the autonomous differential equation $y^{(k)}=f(y)$ by using Faa di Bruno's formula. We can then look at differential equations as a source for investigating special values of Bell polynomials.

In this paper we will focus on finding special values of Bell polynomials when the vector field $f(x)$ of the autonomous differential equation $y^{(k)}=f(y)$ is the exponential function. We will not consider the convergence of the solutions, but we will show that well known numbers such as reduced tangent numbers, Bernoulli numbers, Euler zigzag numbers, Blasius numbers, among others, can be constructed using Bell polynomials. In general, a special class of numbers, which have not yet been studied, are constructed using Bell polynomials. On the other hand, a new family of numbers, called $(k,a)$-autonomous coefficients, is obtained for each value of $k$. Four conjectures about these numbers are established.

This paper is divided as follows. We begin with a summary of results on complete and partial Bell polynomials, which will be used to demonstrate the main results presented here. Next, we introduce the complete and partial exponential autonomous functions, the recurrence relations of these are constructed using Bell polynomials, and some recurrence relations of solutions of various initial value problems are given. In the fourth section the $(k,a)$-autonomous coefficients are introduced. From these numbers we can obtain the triangular numbers, the 8-sequence numbers of $[1,n]$ with 2 contiguous pairs, among others. We finish this work by studying the cases $k=2,3,4$ for the autonomous differential equation $y^{(k)}=e^{ay}$. 

\section{Bell exponential polynomials}

The following basic results can be found at Comtent \cite{Comtent}, and Riordan \cite{Riordan_b}. Exponential Bell polynomials are used to encode information on the ways in which a set can be partitioned, hence they are a very useful tool in combinatorial analysis. Bell polynomials are obtained from the derivatives of composite functions and are given by Faa Di Bruno's formula \cite{Faa}. Bell \cite{Bell}, Gould \cite{Gould_Q} and Mihoubi \cite{Mihoubi} provided important results on these polynomials. We start with the definition of partial Bell polynomials
\begin{definition}
The exponential partial Bell polynomials are the polynomials
\begin{equation*}
    B_{n,k}(x_{1},x_{n},...,x_{n-k+1})
\end{equation*}
in the variables $x_{1},x_{2},...$ defined by the series expansion
\begin{equation}\label{eqn_egf}
\exp\left(u\sum_{j=1}^{\infty}x_{j}\frac{t^{j}}{j!}\right)=1+\sum_{n=1}^{\infty}\frac{t^{n}}{n!}\sum_{k=1}^{n}u^{k}B_{n,k}(x_{1},x_{2},...,x_{n-k+1}).
\end{equation}
\end{definition}
The following result gives the explicit way to calculate the partial Bell polynomials
\begin{theorem}
The partial or incomplete exponential Bell polynomials are given by
\begin{equation*}
B_{n,k}(x_{1},\ldots,x_{n-k+1})=\sum\frac{n!}{c_{1}!c_{2}!\cdots c_{n-k+1}!}\left(\frac{x_{1}}{1!}\right)^{c_{1}}\cdots\left(\frac{x_{n-k+1}}{(n-k+1)!}\right)^{c_{n-k+1}}
\end{equation*}
where the summation takes place over all integers $c_{1},c_{2},...,c_{n-k+1}\geq0$, such that
\begin{eqnarray*}
c_{1}+2c_{2}+\cdots+(n-k+1)c_{n-k+1}&=&n\\
c_{1}+c_{2}+\cdots+c_{n-k+1}&=&k
\end{eqnarray*}
\end{theorem}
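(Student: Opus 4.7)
The plan is to derive the explicit formula directly from the defining generating-function identity \eqref{eqn_egf} by expanding the outer exponential as a power series in $u$, applying the multinomial theorem to the resulting powers, and then extracting the coefficient of $u^{k}t^{n}/n!$.

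First I would write $\exp(uS(t))=\sum_{m\geq 0}\frac{u^{m}}{m!}S(t)^{m}$, where $S(t)=\sum_{j\geq 1}x_{j}t^{j}/j!$. Since $B_{n,k}$ is by definition the coefficient of $u^{k}t^{n}/n!$ in \eqref{eqn_egf}, only the term with $m=k$ can contribute. Applying the multinomial theorem to $S(t)^{k}$ gives
$$S(t)^{k}=\sum_{\substack{c_{1},c_{2},\ldots\geq 0 \\ c_{1}+c_{2}+\cdots=k}}\frac{k!}{c_{1}!c_{2}!\cdots}\prod_{j\geq 1}\left(\frac{x_{j}}{j!}\right)^{c_{j}}t^{\,c_{1}+2c_{2}+\cdots}.$$
The factor $k!$ cancels the $1/k!$ from the exponential expansion. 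Collecting the monomials in $t$ for which $c_{1}+2c_{2}+\cdots=n$, and multiplying by $n!$ to match the normalization $t^{n}/n!$, yields exactly the sum claimed for $B_{n,k}$.

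The only remaining point is to justify restricting the variables to $x_{1},\ldots,x_{n-k+1}$. Suppose some $c_{j}\geq 1$ with $j>n-k+1$; since the remaining $k-1$ units of the partition $c_{1}+c_{2}+\cdots=k$ each contribute at least $1$ to the weighted sum, one finds $n=c_{1}+2c_{2}+\cdots\geq j+(k-1)>n$, a contradiction. Therefore any term involving $x_{j}$ with $j>n-k+1$ must vanish, and the sum indeed ranges only over compositions supported on $\{1,2,\ldots,n-k+1\}$.

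The main obstacle is purely bookkeeping: once the exponential and multinomial series are combined, the identity follows by equating coefficients in two formal variables, and no deeper combinatorial or analytic tool is required beyond the index-range check in the last step.
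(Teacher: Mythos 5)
Your proof is correct: expanding the exponential, isolating the $m=k$ term, applying the multinomial theorem, and equating coefficients of $u^{k}t^{n}/n!$ is exactly the standard derivation, and your final index-range check (that $c_{j}\geq 1$ for $j>n-k+1$ would force $n\geq j+(k-1)>n$) properly justifies truncating to the variables $x_{1},\ldots,x_{n-k+1}$. Note that the paper itself gives no proof of this theorem --- it is quoted as a known fact from Comtet and Riordan --- so there is no alternative argument to compare against; your derivation fills that gap in the standard way.
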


The following are special cases of partial Bell polynomials and will be very useful for proving results in this paper
\begin{align}
B_{n,1}(x_{1},...,x_{n})&=x_{n},\\
B_{n,2}(x_{1},...,x_{n-1})&=\frac{1}{2}\sum_{k=1}^{n-1}\binom{n}{k}x_{k}x_{n-k},\\
B_{n,n-a}(x_{1},...,x_{a+1})&=\sum_{j=a+1}^{2a}\frac{j!}{a!}\binom{n}{j}x_{1}^{n-j}B_{a,j-a}\left(\frac{x_{2}}{2},\cdots,\frac{x_{2(a+1)-j}}{2(a+1)-j}\right)\label{eqn_n,n-a},\nonumber\\
&\ 1\leq a< n,
\end{align}
\begin{align}
B_{n,n}(x_{1})&=x_{1}^{n},\label{eqn_n_n}\\
B_{n,n-1}(x_{1},x_{2})&=\binom{n}{2}x_{1}^{n-2}x_{2},\label{eqn_n_n-1}\\
B_{n,n-2}(x_{1},x_{2},x_{3})&=\binom{n}{3}x_{1}^{n-3}x_{3}+3\binom{n}{4}x_{1}^{n-4}x_{2}^{2},\label{eqn_n_n-2}\\
B_{n,n-3}(x_{1},x_{2},x_{3},x_{4})&=\binom{n}{4}x_{1}^{n-4}x_{4}+10\binom{n}{5}x_{1}^{n-5}x_{2}x_{3}+15\binom{n}{6}x_{1}^{n-6}x_{2}^{3},\label{eqn_n_n-3}\\
B_{n,n-4}(x_{1},x_{2},x_{3},x_{4},x_{5})&=\binom{n}{5}x_{1}^{n-5}x_{5}+5\binom{n}{6}x_{1}^{n-6}[3x_{2}x_{4}+2x_{3}^{2}]\nonumber\\
&+105\binom{n}{7}x_{1}^{n-7}x_{2}^{2}x_{3}+105\binom{n}{8}x_{1}^{n-8}x_{2}^{4}.\label{eqn_n,n-4}
\end{align}

Some values of partial Bell polynomials are
\begin{align*}
    B_{n,k}(0!,1!,...,(n-k)!)&=\stirlingfk{n}{k}\ \ (\textit{Stirling number of first kind}),\\
    B_{n,k}(1!,...,(n-k)!)&=\binom{n-1}{k-1}\frac{n!}{k!}\ \ (\textit{Lah number}),\\
    B_{n,k}(1,1,...,1)&=\stirlingsk{n}{k}\ \ (\textit{Stirling number of second kind}),\\
    B_{n,k}(1,2,...,n-k+1)&=\binom{n}{k}k^{n-k}\ \ (\textit{Idempotent number}).
\end{align*}
Then we can see the beautiful relationship that exists between Bell polynomials and numbers like the above.

On the other hand, the partial Bell polynomials can be efficiently computed by means of the recurrence relation
\begin{equation}\label{eqn_recu_bep}
    B_{n,k}(x_{1},...,x_{n-k+1})=\sum_{i=1}^{n-k+1}\binom{n-1}{i-1}x_{i}B_{n-i,k-1}(x_{1},...,x_{n-i-k+2}).
\end{equation}

The definition of complete Bell polynomials is as follows
\begin{definition}
The sum 
\begin{equation}
B_{n}(x_{1},x_{2},...,x_{n})=\sum_{k=1}^{n}B_{n,k}(x_{1},x_{2},...,x_{n-k+1})
\end{equation}
is called $n$-th complete exponential Bell polynomials with exponential generating function given by
to make $u=1$ in (\ref{eqn_egf})
\begin{equation}
\exp\left(\sum_{m=1}^{\infty}x_{m}\frac{t^{m}}{m!}\right)=\sum_{n=0}^{\infty}B_{n}(x_{1},x_{2},...,x_{n})\frac{t^{n}}{n!}
\end{equation}
and $B_{0}=1$.
\end{definition}
Some complete Bell polynomials are
\begin{align*}
B_{1}(x_{1})&=x_{1},\\
B_{2}(x_{1},x_{2})&=x_{1}^{2}+x_{2},\\
B_{3}(x_{1},x_{2},x_{3})&=x_{1}^{3}+3x_{1}x_{2}+x_{3},\\
B_{4}(x_{1},x_{2},x_{3},x_{4})&=x_{1}^{4}+6x_{1}^{2}x_{2}+4x_{1}x_{3}+3x_{2}^{2}+x_{4},\\
B_{5}(x_{1},x_{2},x_{3},x_{4},x_{5})&=x_{1}^{5}+10x_{1}^{3}x_{2}+15x_{1}x_{2}^{2}+10x_{1}^{2}x_{3}+10x_{2}x_{3}+5x_{1}x_{4}+x_{5}.
\end{align*}

\begin{theorem}
The complete Bell polynomials $B_{n}$ satisfy the identity
\begin{equation}\label{eqn_recu_bec}
    B_{n+1}(x_{1},...,x_{n+1})=\sum_{i=0}^{n}\binom{n}{i}B_{n-i}(x_{1},...,x_{n-i})x_{i+1}.
\end{equation}
\end{theorem}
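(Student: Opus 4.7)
The plan is to prove the identity by differentiating the exponential generating function for the complete Bell polynomials and matching coefficients of $t^n/n!$ on both sides.

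First I would set $F(t)=\sum_{m=1}^{\infty}x_{m}\,t^{m}/m!$, so that the generating function identity becomes $e^{F(t)}=\sum_{n=0}^{\infty}B_{n}(x_{1},\dots,x_{n})\,t^{n}/n!$. Differentiating the right-hand side with respect to $t$ yields $\sum_{n=0}^{\infty}B_{n+1}(x_{1},\dots,x_{n+1})\,t^{n}/n!$, which is exactly the kind of object appearing on the left-hand side of the identity we want to prove.

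Next I would differentiate the left-hand side as $F'(t)e^{F(t)}$. Since $F'(t)=\sum_{i=0}^{\infty}x_{i+1}\,t^{i}/i!$, this is a Cauchy product of two exponential generating functions. Applying the standard binomial rule for the product of two EGFs, the coefficient of $t^{n}/n!$ in $F'(t)e^{F(t)}$ is $\sum_{i=0}^{n}\binom{n}{i}x_{i+1}B_{n-i}(x_{1},\dots,x_{n-i})$. Equating this with the coefficient of $t^{n}/n!$ obtained from differentiating the right-hand side yields the desired recurrence \eqref{eqn_recu_bec}.

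There is no real obstacle here — the whole argument is a one-line manipulation of the EGF, and the only care needed is the index shift $F'(t)=\sum_{i\ge 0}x_{i+1}t^{i}/i!$ so that $x_{i+1}$ ends up paired with $B_{n-i}$ in the convolution. If one prefers an entirely combinatorial proof, the identity can alternatively be read as saying that an element-marked block decomposition of a set partition of $[n+1]$ gives the block containing $n+1$ (of size $i+1$, contributing $x_{i+1}$ and a choice of $\binom{n}{i}$ co-block elements) times an arbitrary partition of the remaining $n-i$ points (contributing $B_{n-i}$); but the analytic derivation above is the cleanest.
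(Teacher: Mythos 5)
Your proof is correct: differentiating $e^{F(t)}=\sum_{n\ge 0}B_{n}t^{n}/n!$ to get $F'(t)e^{F(t)}$ and reading off the coefficient of $t^{n}/n!$ in the Cauchy product gives exactly the stated recurrence, and the index shift $F'(t)=\sum_{i\ge 0}x_{i+1}t^{i}/i!$ is handled properly. The paper itself offers no proof of this identity (it is quoted in the background section as a known result from Comtet and Riordan), so there is nothing to compare against; your EGF argument, together with the combinatorial reading via the block containing the element $n+1$, is the standard textbook derivation and is complete as written.
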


From this it follows that
\begin{equation}\label{eqn_bell_0}
    B_{2n+1}(0,x_{2},0,..,0,x_{2n+1})=0
\end{equation}
for all $n\geq0$.

Another useful identity that Bell polynomials fulfill is as follows
\begin{equation}\label{eqn_bell_signo}
    B_{n}(-x_{1},x_{2},-x_{3},...,(-1)^{n-1}x_{n})=(-1)^{n}B_{n}(x_{1},x_{2},x_{3},...,x_{n})
\end{equation}

\section{Exponential autonomous functions}

We will study the solution of the equation 
\begin{equation}\label{eqn_exp_k}
y^{(k)}=e^{ay}    
\end{equation}
for any $a\in\mathbb{C}$. Making $y=u/a$ we obtain the equivalent equation 
\begin{equation}\label{eqn_equi}
u^{(k)}=ae^{u}   
\end{equation}
Then without loss of generality we will focus on the equation (\ref{eqn_equi}). Now by applying derivative to (\ref{eqn_equi}) we obtain another equation equivalent to (\ref{eqn_exp_k})
\begin{eqnarray}
    u^{(k+1)}&=&ae^{u}u^{\prime}=u^{(k)}u^{\prime}
\end{eqnarray}
Denote $(x_{1},x_{2},...,x_{k})$ the initial value problem $y(0)=x_{1}$, $y^{\prime}(0)=x_{2}$,...,$y^{(k-1)}(0)=x_{k}$. In this section the general solution and solutions with initial values $(x,0,0,...,0)$, $(x_{1}+k\ln c,cx_{2},...,c^{k-1}x_{k})$, and $(x_{1},-x_{2},...,x_{2k-1},-x_{2k})$ of the equation (\ref{eqn_equi}) are given. We will define the complete and partial exponential autonomous functions and the exponential autonomous polynomials, which are the coefficients of the power series solution of the equation (\ref{eqn_equi}). Moreover, we will find special values of these functions by using Bell polynomials. We begin with the following definition
\begin{definition}
Take $a\in\C$. Suppose $\x=(x_{1},...,x_{k})$. Let $f_{n}(\x,a)$ denote the \textit{complete exponential autonomous functions of order $k$}, $k\geq1$, recursively defined as
\begin{align}
    f_{0}(\x,a)&=x_{1},\\
    f_{1}(\x,a)&=x_{2},\\
    &\vdots&\nonumber\\
    f_{k-1}(\x,a)&=x_{k},\\
    f_{k}(\x,a)&=ae^{x_{1}},\\
    f_{n+k}(\x,a)&=ae^{x_{1}}B_{n}(f_{1}(\x,a),...,f_{n}(\x,a)),\ \ n\geq1, \label{eqn_def_fac}
\end{align}
where $B_{n}(y_{1},...,y_{n})$ are the complete Bell polynomials. When $x_{1}=0$, we define the \textit{exponential autonomous polynomials} as $q_{n}(x_{2},...,x_{k})=f_{n}(0,x_{2},...,x_{k})$, 
for $n\geq1$. 
\end{definition}
When $a=1$ in the above definition, we will write $f_{n}(\x)=f_{n}(\x,1)$. In this section we will restrict ourselves to exponential autonomous functions and autonomous polynomials will be dealt with in the next section.

The following are complete exponential autonomous functions for $k=1,2,3,4$. They will be very useful in the next section:\\
When $k=1$, $f_{n}(x,a)=(n-1)!a^{n}e^{nx}$.
When $k=2$
\begin{align*}
f_{0}(x,y,a)&=x,\\
f_{1}(x,y,a)&=y,\\
f_{2}(x,y,a)&=ae^{x},\\
f_{3}(x,y,a)&=aye^{x},\\
f_{4}(x,y,a)&=ae^{x}(ae^{x}+y^{2}),\\
f_{5}(x,y,a)&=ae^{x}(4aye^{x}+y^{3}),\\
f_{6}(x,y,a)&=ae^{x}(4a^{2}e^{2x}+11ay^{2}e^{x}+y^{4}).
\end{align*}
When $k=3$
\begin{align*}
f_{0}(x,y,z,a)&=x,\\
f_{1}(x,y,z,a)&=y,\\
f_{2}(x,y,z,a)&=z,\\
f_{3}(x,y,z,a)&=ae^{x},\\
f_{4}(x,y,z,a)&=aye^{x},\\
f_{5}(x,y,z,a)&=ae^{x}(z+y^{2}),\\
f_{6}(x,y,z,a)&=ae^{x}(ae^{x}+3yz+y^{3}),\\
f_{7}(x,y,z,a)&=ae^{x}(5ye^{x}+3z^{2}+6y^{2}z+y^{4}).
\end{align*}
And finally, when $k=4$
\begin{align*}
f_{0}(x,y,z,w,a)&=x,\\
f_{1}(x,y,z,w,a)&=y,\\
f_{2}(x,y,z,w,a)&=z,\\
f_{3}(x,y,z,w,a)&=w,\\
f_{4}(x,y,z,w,a)&=ae^{x},\\
f_{5}(x,y,z,w,a)&=aye^{x},\\
f_{6}(x,y,z,w,a)&=ae^{x}(z+y^{2}),\\
f_{7}(x,y,z,w,a)&=ae^{x}(w+3yz+y^{3}),\\
f_{8}(x,y,z,w,a)&=ae^{x}(ae^{x}+3z^{2}+4yw+6y^{2}z+y^{4}).
\end{align*}

In the following result we will show that the exponential generating function of the complete exponential autonomous functions is solution of the equation (\ref{eqn_equi})

\begin{theorem}\label{theo_sol}
Let $\textbf{x}=(x_{1},...,x_{k})$. The series 
\begin{equation}
    E_{k}(t,\textbf{x},a)=\sum_{n=0}^{\infty}f_{n}(\textbf{x},a)\frac{t^{n}}{n!}
\end{equation}
is solution of the differential equation (\ref{eqn_equi}).
\end{theorem}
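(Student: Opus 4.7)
The plan is to verify the ODE term by term on the power series $E_k(t,\mathbf{x},a)$, using the recursion for $f_{n+k}$ to turn the $k$-th derivative series into something recognizable via the exponential generating function for complete Bell polynomials.

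First I would differentiate $E_k$ formally $k$ times in $t$, obtaining
\begin{equation*}
E_k^{(k)}(t,\mathbf{x},a)=\sum_{n=0}^{\infty}f_{n+k}(\mathbf{x},a)\frac{t^{n}}{n!}.
\end{equation*}
Next I would split off the $n=0$ term, for which the definition gives $f_k(\mathbf{x},a)=ae^{x_1}$, and for $n\geq 1$ insert the recursion $f_{n+k}(\mathbf{x},a)=ae^{x_1}B_n(f_1(\mathbf{x},a),\ldots,f_n(\mathbf{x},a))$. Factoring out $ae^{x_1}$ and using $B_0=1$ to absorb the $n=0$ term yields
\begin{equation*}
E_k^{(k)}(t,\mathbf{x},a)=ae^{x_1}\sum_{n=0}^{\infty}B_n(f_1(\mathbf{x},a),\ldots,f_n(\mathbf{x},a))\frac{t^{n}}{n!}.
\end{equation*}

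Then I would invoke the exponential generating function for complete Bell polynomials (the $u=1$ specialization of equation~(\ref{eqn_egf})) with variables $y_m=f_m(\mathbf{x},a)$, giving
\begin{equation*}
\sum_{n=0}^{\infty}B_n(f_1,\ldots,f_n)\frac{t^{n}}{n!}=\exp\!\left(\sum_{m=1}^{\infty}f_m(\mathbf{x},a)\frac{t^{m}}{m!}\right).
\end{equation*}
The sum in the exponent is exactly $E_k(t,\mathbf{x},a)-f_0(\mathbf{x},a)=E_k(t,\mathbf{x},a)-x_1$, so combining with the $ae^{x_1}$ factor collapses the $x_1$'s and yields $E_k^{(k)}(t,\mathbf{x},a)=ae^{E_k(t,\mathbf{x},a)}$, which is precisely~(\ref{eqn_equi}).

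There is no real obstacle here: the argument is a one-line unwinding once the recursion $f_{n+k}=ae^{x_1}B_n(f_1,\ldots,f_n)$ is paired with the Bell polynomial EGF. The only mildly delicate point, which I would flag explicitly, is the bookkeeping of the $n=0$ term (one must use $B_0=1$ to fold $f_k=ae^{x_1}$ into the series) and the fact that $f_0=x_1$ is removed from the Bell exponent but reappears via the prefactor $ae^{x_1}$, so the two $e^{x_1}$ contributions conspire to give $ae^{E_k}$ rather than $ae^{E_k-x_1}$ or $ae^{E_k+x_1}$. Questions of convergence are, per the author's remark in the introduction, left aside; the identity is an identity of formal power series in $t$.
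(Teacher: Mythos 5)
Your proposal is correct and follows exactly the same route as the paper's proof: differentiate the series $k$ times, substitute the recursion $f_{n+k}=ae^{x_{1}}B_{n}(f_{1},\ldots,f_{n})$, and apply the complete Bell polynomial generating function to recognize $ae^{x_{1}}e^{E_{k}-x_{1}}=ae^{E_{k}}$. Your explicit remarks about folding the $n=0$ term into the series via $B_{0}=1$ and the cancellation of the two $e^{x_{1}}$ contributions are exactly the bookkeeping the paper performs implicitly.
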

\begin{proof}
Taking derivative $k$ times with respect to $t$ the series $E_{k}(t,\textbf{x},a)$, using the definition of the autonomous functions $f_{n}(\textbf{x},a)$ and the equation (\ref{eqn_egf}), then
\begin{align*}
    \frac{\partial^{k}E_{k}(t,\textbf{x},a)}{\partial t^{k}}&=\sum_{n=0}^{\infty}f_{n+k}(\textbf{x},a)\frac{t^{n}}{n!}\\
    &=ae^{x_{1}}+\sum_{n=1}^{\infty}ae^{x_{1}}B_{n}(f_{1}(\x,a),...,f_{n}(\x,a))\frac{t^{n}}{n!}\\
    &=e^{ax_{1}}\left(1+\sum_{n=1}^{\infty}B_{n}(f_{1}(\x,a),...,f_{n}(\x,a))\frac{t^{n}}{n!}\right)\\
    &=ae^{x_{1}}e^{E_{k}(t,\textbf{x},a)-x_{1}}\\
    &=ae^{E_{k}(t,\textbf{x},a)}.
\end{align*}
\end{proof}

Now we define the partial exponential autonomous functions
\begin{definition}
Let $g_{n,i}(\x,a)$ denote the \textit{partial exponential autonomous functions} as
\begin{equation}
    g_{n,i}(\x,a)=B_{n,i}(f_{1}(\x,a),...,f_{n-i+1}(\x,a))
\end{equation}
with $g_{0,0}(\x,a)=1$, $g_{n,0}(\x,a)=0$, for $n\geq1$, and $g_{0,i}(\x,a)=0$ , for $i\geq1$. Then
\begin{equation}
    f_{n+k}(\x,a)=ae^{x_{1}}\sum_{i=1}^{n}g_{n,i}(\x,a).
\end{equation}
\end{definition}

In the following result we establish recurrence relations for the functions $f_{n}(\x,a)$ and $g_{n,i}(\x,a)$. Many important results of this paper will be proved using this theorem.
\begin{theorem}\label{theo_recu}
The autonomous functions $f_{n}(\textbf{x},a)$ and $g_{n,i}(\x,a)$ fulfill the recurrence relations
\begin{equation}\label{eqn_recu_fac}
    f_{n+k+1}(\textbf{x},a)=\sum_{i=0}^{n}\binom{n}{i}f_{n-i+k}(\textbf{x},a)f_{i+1}(\textbf{x},a)
\end{equation}
and
\begin{equation}\label{eqn_recu_fap}
    g_{n,i}(\textbf{x},a)=\sum_{j=1}^{n-i+1}\binom{n-1}{j-1}f_{j}(\textbf{x},a)g_{n-j,i-1}(\textbf{x},a).
\end{equation}
\end{theorem}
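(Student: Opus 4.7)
The plan is to handle the two identities separately: the first via Leibniz's rule applied to the differentiated form of the ODE, and the second by direct substitution into the known recurrence for partial Bell polynomials.

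For the recurrence (\ref{eqn_recu_fac}), I would work with the generating function $u(t) = E_{k}(t,\textbf{x},a)$. By Theorem \ref{theo_sol} we have $u^{(k)} = a e^{u}$, so differentiating once in $t$ yields the equivalent relation
\begin{equation*}
u^{(k+1)} = a e^{u} \, u' = u^{(k)} \, u',
\end{equation*}
which is exactly the identity displayed just after (\ref{eqn_equi}). Now I would apply the general Leibniz rule to differentiate the right-hand product $n$ more times:
\begin{equation*}
u^{(k+n+1)}(t) = \sum_{i=0}^{n} \binom{n}{i} u^{(k+n-i)}(t) \, u^{(i+1)}(t).
\end{equation*}
Evaluating at $t=0$ and using that $u^{(j)}(0)=f_{j}(\textbf{x},a)$ (which is immediate from the definition of $E_{k}$ as the exponential generating function of the $f_{n}$) yields (\ref{eqn_recu_fac}) directly.

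For (\ref{eqn_recu_fap}), the argument is purely formal. By definition $g_{n,i}(\textbf{x},a) = B_{n,i}(f_{1}(\textbf{x},a),\ldots,f_{n-i+1}(\textbf{x},a))$. I would simply substitute $x_{j} := f_{j}(\textbf{x},a)$ into the partial Bell polynomial recurrence (\ref{eqn_recu_bep}), obtaining
\begin{equation*}
g_{n,i}(\textbf{x},a) = \sum_{j=1}^{n-i+1} \binom{n-1}{j-1} f_{j}(\textbf{x},a) \, B_{n-j,i-1}(f_{1}(\textbf{x},a),\ldots,f_{n-j-i+2}(\textbf{x},a)),
\end{equation*}
and the inner Bell polynomial is precisely $g_{n-j,i-1}(\textbf{x},a)$. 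A brief sanity check at the boundary cases $n=0$ or $i=0$, where the convention $g_{0,0}=1$ and $g_{n,0}=g_{0,i}=0$ (for positive indices) is consistent with the empty-sum convention on the right, completes the argument.

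The only nontrivial step is the first one, and even there the main subtlety is cosmetic: recognizing that the ODE in its once-differentiated form $u^{(k+1)} = u^{(k)} u'$ is the right object to iterate, so that Leibniz produces the symmetric convolution $\sum \binom{n}{i} f_{n-i+k} f_{i+1}$ rather than a convolution involving the Bell polynomials themselves. In particular, no genuinely new combinatorial identity is required; (\ref{eqn_recu_fac}) is an ODE identity and (\ref{eqn_recu_fap}) is the Bell polynomial recurrence in disguise.
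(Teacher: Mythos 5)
Your proof of (\ref{eqn_recu_fap}) is exactly the paper's: substitute $x_{j}:=f_{j}(\x,a)$ into the partial Bell recurrence (\ref{eqn_recu_bep}) and recognize the result. For (\ref{eqn_recu_fac}) you take a genuinely different (though ultimately equivalent) route. The paper makes the same substitution into the \emph{complete} Bell recurrence (\ref{eqn_recu_bec}) and multiplies by $ae^{x_{1}}$, so that the defining relation $f_{m+k}(\x,a)=ae^{x_{1}}B_{m}(f_{1},\ldots,f_{m})$ converts both sides into autonomous functions; this is a one-line algebraic manipulation that never mentions the differential equation. You instead invoke Theorem \ref{theo_sol}, differentiate the ODE once to get $u^{(k+1)}=u^{(k)}u'$, apply Leibniz $n$ times, and read off coefficients at $t=0$. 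This is correct and not circular, since Theorem \ref{theo_sol} precedes the statement and its proof does not use either recurrence; the only care needed is that ``evaluating at $t=0$'' should be understood as extracting constant terms of formal power series, because the paper deliberately ignores convergence. What each approach buys: the paper's proof is self-contained at the level of Bell-polynomial identities and works for the $g_{n,i}$ and $f_{n}$ uniformly; yours explains \emph{why} the symmetric convolution $\sum\binom{n}{i}f_{n-i+k}f_{i+1}$ appears (it is the $n$-fold derivative of the once-differentiated equation following (\ref{eqn_equi})). In substance the two coincide, since (\ref{eqn_recu_bec}) is precisely the coefficient form of $\frac{d}{dt}e^{g}=e^{g}g'$, so your Leibniz step is the generating-function shadow of the paper's substitution.
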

\begin{proof}
Making $y_{j}=f_{j}(\textbf{x},a)$ in (\ref{eqn_recu_bep}) and (\ref{eqn_recu_bec}) and multiplying these by $ae^{x_{1}}$, we obtain the desired result.
\end{proof}
Now we will study the behavior of the functions $f_{n}(\x,a)$ evaluated at $\x=(x_{1},0,0,...,0)$. From previous result we can construct the first important sequence arising from the differential equation (\ref{eqn_equi})
\begin{theorem}\label{theo_val_f}
The functions $f_{n}(\x,a)$ take the following values at $\x=(x_{1},0,...,0)$
\begin{enumerate}
    \item $f_{kn+1}(x_{1},0,...,0,a)=f_{kn+2}(x_{1},0,...,0,a)=\cdots=f_{kn+k-1}(x_{1},0,...,0,a)=0$, $n\geq0$,
    \item $f_{kn}(x_{1},0,...,0,a)=A_{n}^{(k)}(a)e^{nx_{1}}$, $n\geq1$
\end{enumerate}
where $A^{(k)}_{1}(a)=1$ and
\begin{equation}\label{eqn_recu_A}
    A^{(k)}_{n+2}(a)=\sum_{i=0}^{n}\binom{kn+k-1}{ki+k-1}A^{(k)}_{n-i+1}(a)A^{(k)}_{i+1}(a),\ \ 
\end{equation}
$n\geq0$, $k\geq1$.
\end{theorem}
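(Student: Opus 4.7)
The plan is to prove parts (1) and (2) simultaneously by strong induction on the index, using the recurrence for $f_{m+k+1}$ from Theorem \ref{theo_recu}. The base cases come for free from the defining values: for $1 \leq j \leq k-1$ we have $f_j(x_{1},0,\ldots,0,a)=x_{j+1}=0$, while $f_k(x_{1},0,\ldots,0,a)=ae^{x_{1}}$, which fixes $A_{1}^{(k)}(a)$ and anchors the induction.

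For the inductive step I would invoke equation (\ref{eqn_recu_fac}),
\[
f_{m+k+1}(\mathbf{x},a)=\sum_{i=0}^{m}\binom{m}{i}\,f_{m+k-i}(\mathbf{x},a)\,f_{i+1}(\mathbf{x},a),
\]
and observe that in every term the two indices satisfy $i+1\geq 1$ and $m+k-i\geq k$, so $f_{0}=x_{1}$ never enters the sum. By the induction hypothesis, $f_{i+1}$ and $f_{m+k-i}$ evaluated at $(x_{1},0,\ldots,0,a)$ both vanish unless their indices are positive multiples of $k$. Writing $i+1=kj$ and $m+k-i=k\ell$ with $j,\ell\geq 1$ and adding gives $m+1=k(j+\ell-1)$. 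Hence if $m+1\not\equiv 0\pmod{k}$ every term is zero, which is precisely part (1) for $M=m+k+1$ in one of the residue classes $\{kn+1,\ldots,kn+k-1\}$.

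When $m+1=kn$ the surviving terms are parameterized by $j\in\{1,\ldots,n\}$ with $\ell=n-j+1$, so $i=kj-1$ and $m+k-i=k(n-j+1)$. Substituting the inductive formula $f_{kj}(x_{1},0,\ldots,0,a)=A_{j}^{(k)}(a)e^{jx_{1}}$ (and the analogous one for $f_{k(n-j+1)}$) yields
\[
f_{k(n+1)}(x_{1},0,\ldots,0,a)=e^{(n+1)x_{1}}\sum_{j=1}^{n}\binom{kn-1}{kj-1}A_{n-j+1}^{(k)}(a)\,A_{j}^{(k)}(a),
\]
which proves part (2) once $A_{n+1}^{(k)}(a)$ is identified with the sum on the right. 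Shifting $n\mapsto n+1$ and then reindexing $i=j-1$ rewrites this as the formula stated in (\ref{eqn_recu_A}).

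The main obstacle is the combinatorial bookkeeping in the double divisibility constraint: one must verify that both $k\mid(i+1)$ and $k\mid(m+k-i)$ can be enforced cleanly without any contribution from $f_{0}$ (which would break the modular argument because $f_{0}=x_{1}$ need not vanish), and then convert the natural range $1\leq j\leq n$ together with the substitution $n\mapsto n+1$ into the exact binomial $\binom{kn+k-1}{ki+k-1}$ appearing in (\ref{eqn_recu_A}). Once the indices are aligned, the rest of the argument is a direct substitution of the inductive formulas into Theorem \ref{theo_recu}.
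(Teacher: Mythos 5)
Your proposal is correct and follows essentially the same route as the paper: strong induction via the recurrence (\ref{eqn_recu_fac}) of Theorem \ref{theo_recu}, killing all terms whose indices are not multiples of $k$ and then reindexing the surviving terms to obtain (\ref{eqn_recu_A}). If anything, your explicit modular bookkeeping (writing $i+1=kj$, $m+k-i=k\ell$ and noting $f_{0}$ never appears) is tighter than the paper's brief justification of part (1), but the underlying argument is identical.
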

\begin{proof}
Let $\mathbf{x}=(x_{1},0,...,0)$. Clearly, $f_{1}(\mathbf{x},a)=0$, $f_{k+1}(\mathbf{x},a)=B_{1}(f_{1}(\mathbf{x},a))=f_{1}(\mathbf{x},a)=0$.  Now suppose it is true that $f_{ki+1}(\mathbf{x},a)=0$ for $2\leq i\leq n-1$. By the theorem \ref{theo_recu}
\begin{align*}
f_{kn+1}(\mathbf{x},a)&=f_{k(n-1)+k+1}(\mathbf{x},a)\\
&=\sum_{i=0}^{k(n-1)}\binom{k(n-1)}{i}f_{kn-i}(\mathbf{x},a)f_{i+1}(\mathbf{x},a).
\end{align*}
Since the product $f_{kn-i}(\mathbf{x},a)f_{i+1}(\mathbf{x},a)$ contain the functions $f_{kj+1}(\mathbf{x},a)$, then $f_{kn+1}(\mathbf{x},a)=0$ for all $n$. Likewise it is proved for $f_{kn+j}(\mathbf{x},a)$, $j=2,...,k-1$. Now we will prove $2$. We know that $f_{k}(\mathbf{x},a)=ae^{x_{1}}$, $f_{2k}(\mathbf{x},a)=a^{2}e^{2x_{1}}$ and suppose that $f_{kn}(\mathbf{x},a)=A_{n}^{(k)}(a)e^{nx_{1}}$. Then
\begin{align*}
    f_{kn+k}(\mathbf{x},a)&=f_{(kn-1)+k+1}(\mathbf{x},a)\\
    &=\sum_{i=0}^{kn-1}\binom{kn-1}{i}f_{kn-1-i+k}(\mathbf{x},a)f_{i+1}(\mathbf{x},a)\\
    &=\sum_{i=0}^{n-1}\binom{kn-1}{ki+1}f_{k(n-i)}(\mathbf{x},a)f_{ki+k}(\mathbf{x},a)\\
    &=\sum_{i=0}^{n-1}\binom{kn-1}{ki+1}A_{n-i}^{(k)}(a)e^{(n-i)x_{1}}A_{i+1}^{(k)}(a)e^{(i+1)x_{1}}\\
    &=e^{(n+1)x_{1}}\sum_{i=0}^{n-1}\binom{kn-1}{ki+1}A_{n-i}^{(k)}(a)A_{i+1}^{(k)}(a)\\
    &=e^{(n+1)x_{1}}A_{n+1}^{(k)}(a).
\end{align*}
\end{proof}

It is easy to show that $A_{n}^{(1)}(a)=(n-1)!a^{n}$ when $k=1$. We will use the equation (\ref{eqn_recu_A}) to prove this result. Suppose it is true that 
$A_{i}^{(1)}(a)=(i-1)!a^{i}$ for $i$ ranging between $1$ and $n+1$. We have
\begin{align*}
A^{(1)}_{n+2}(a)&=\sum_{i=0}^{n}\binom{n}{i}A^{(1)}_{n-i+1}(a)A^{(1)}_{i+1}(a)\\
&=\sum_{i=0}^{n}\binom{n}{i}(n-i)!a^{n-i+1}i!a^{i+1}\\
&=a^{n+2}\sum_{i=0}^{n}\binom{n}{i}(n-i)!i!\\
&=a^{n+2}n!(n+1)=a^{n+2}(n+1)!.
\end{align*}
We can extend the above result to all $k\geq1$
\begin{proposition}
For all $k\geq1$ we have
\begin{equation}
    A_{n}^{(k)}(a)=a^{n}A_{n}^{(k)}(1)
\end{equation}
\end{proposition}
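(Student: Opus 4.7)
The plan is to prove the identity by strong induction on $n$ using the recurrence relation (\ref{eqn_recu_A}) from Theorem \ref{theo_val_f}. The key observation is that in each term of that recurrence the product $A^{(k)}_{n-i+1}(a)\, A^{(k)}_{i+1}(a)$ should, by the induction hypothesis, carry a factor $a^{(n-i+1)+(i+1)} = a^{n+2}$ that is independent of the summation index $i$, so the scalar $a^{n+2}$ factors out cleanly.

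For the base case I would handle $n=1$ (and $n=2$) directly. From the proof of Theorem \ref{theo_val_f} one reads off $f_{k}(\x,a) = ae^{x_{1}}$ and $f_{2k}(\x,a) = a^{2}e^{2x_{1}}$ at $\x=(x_{1},0,\ldots,0)$, so $A^{(k)}_{1}(a) = a$ and $A^{(k)}_{2}(a) = a^{2}$, which in particular gives $A^{(k)}_{1}(1)=1$ and $A^{(k)}_{2}(1)=1$. Therefore $A^{(k)}_{1}(a) = a\cdot A^{(k)}_{1}(1)$ and $A^{(k)}_{2}(a) = a^{2}\cdot A^{(k)}_{2}(1)$, launching the induction.

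For the inductive step, suppose $A^{(k)}_{j}(a) = a^{j}A^{(k)}_{j}(1)$ holds for every $j$ with $1\le j \le n+1$. Applying (\ref{eqn_recu_A}) and substituting the inductive hypothesis term by term yields
\begin{align*}
A^{(k)}_{n+2}(a) &= \sum_{i=0}^{n}\binom{kn+k-1}{ki+k-1}A^{(k)}_{n-i+1}(a)\,A^{(k)}_{i+1}(a)\\
&= \sum_{i=0}^{n}\binom{kn+k-1}{ki+k-1}\,a^{n-i+1}A^{(k)}_{n-i+1}(1)\cdot a^{i+1}A^{(k)}_{i+1}(1)\\
&= a^{n+2}\sum_{i=0}^{n}\binom{kn+k-1}{ki+k-1}A^{(k)}_{n-i+1}(1)\,A^{(k)}_{i+1}(1)\\
&= a^{n+2}A^{(k)}_{n+2}(1),
\end{align*}
where the last equality is the recurrence (\ref{eqn_recu_A}) applied at $a=1$. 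This completes the induction.

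There is really no serious obstacle here: the recurrence is bilinear in $A^{(k)}$ with constant binomial coefficients, so homogeneity of degree $n$ in $a$ is preserved automatically. The only small thing to watch is that the exponents $n-i+1$ and $i+1$ always add to the same value $n+2$, independent of $i$, which is precisely what allows the factor $a^{n+2}$ to be pulled out of the sum. One can also view the identity more conceptually: the substitution $u = ay$ in the original equation $y^{(k)} = e^{ay}$ rescales the initial data in such a way that evaluation at $(x_{1},0,\ldots,0)$ multiplies the $n$-th diagonal term by $a^{n}$, which is the content of the proposition; this observation gives a sanity check on the inductive argument.
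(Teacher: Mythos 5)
Your proof is correct and follows essentially the same route as the paper: strong induction on $n$ via the bilinear recurrence (\ref{eqn_recu_A}), pulling the uniform factor $a^{(n-i+1)+(i+1)}=a^{n+2}$ out of the sum (the paper merely says ``use the same steps as in the previous proof,'' referring to the $k=1$ computation you reproduce). Your explicit base cases $A^{(k)}_{1}(a)=a$, $A^{(k)}_{2}(a)=a^{2}$, read off from $f_{k}=ae^{x_{1}}$ and $f_{2k}=a^{2}e^{2x_{1}}$, are the correct ones and in fact quietly fix the paper's misprint ``$A^{(k)}_{1}(a)=1$.''
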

\begin{proof}
Suppose by induction that $A_{i}^{(k)}(a)=a^{i}A_{i}^{(k)}(1)$ for all $i$ ranging between $1$ and $n+1$, then use the same steps as in the previous proof.
\end{proof}

From the above proposition it follows that 
\begin{equation}
    E_{k}(t,(x,0,...,0),a)=E_{k}(at,(x,0,...,0),1).
\end{equation}
Then without loss of generality it is sufficient to study the solution of (\ref{eqn_equi}) with initial conditions $y(0)=x$, $y^{\prime}(0)=y^{\prime}(0)=\cdots=y^{(k-1)}(0)=0$ and $a=1$ to generate the sequence $A_{n}^{(k)}(1)$.

The following corollary of theorem \ref{theo_val_f} shows us that the numbers $A_{n}^{(k)}$ can be constructed using Bell polynomials
\begin{corollary}\label{cor_val_f}
Numbers $A_{n}^{(k)}(a)$ fulfill the recurrence relation
\begin{equation}\label{eqn_bt}
    A_{n}^{(k)}(a)=B_{n-k}(\overbrace{0,...,0}^{k-1},A_{1}^{(k)}(a),...,\overbrace{0,...,0}^{k-1},A_{n-k}^{(k)}(a)),\ \ n\geq1.
\end{equation}
\end{corollary}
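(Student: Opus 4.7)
The plan is to obtain the identity by applying the defining recurrence (\ref{eqn_def_fac}) at the initial data $\x = (x_1,0,\ldots,0)$ and then reading off the pattern of zero and nonzero arguments from Theorem \ref{theo_val_f}. Concretely, I would substitute $n \to n-k$ in (\ref{eqn_def_fac}) to obtain
\[
f_n(\x, a) = a e^{x_1}\, B_{n-k}\bigl(f_1(\x, a),\, f_2(\x, a),\, \ldots,\, f_{n-k}(\x, a)\bigr)
\]
for $n \geq k+1$, and then specialize $\x = (x_1,0,\ldots,0)$.

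By part 1 of Theorem \ref{theo_val_f}, every $f_j(\x, a)$ with $k \nmid j$ vanishes at this initial data, while by part 2 we have $f_{km}(\x, a) = A_m^{(k)}(a)\, e^{m x_1}$. Hence the argument list of the Bell polynomial on the right takes exactly the alternating form described in the corollary: a block of $k-1$ zeros, then the term corresponding to $A_1^{(k)}(a)$, then $k-1$ more zeros, then the term for $A_2^{(k)}(a)$, and so on. When $n = k\ell$ is itself a multiple of $k$, the list terminates with the entry belonging to $A_{\ell-1}^{(k)}(a)$ at position $n-k$, while the left-hand side equals $A_\ell^{(k)}(a)\, e^{\ell x_1}$.

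To conclude, I would strip off the exponential prefactors. The cleanest route is to invoke the weighted homogeneity $B_n(c y_1, c^2 y_2,\ldots, c^n y_n) = c^n B_n(y_1,\ldots,y_n)$ with $c = e^{x_1/k}$, which extracts a factor $e^{(n-k) x_1/k}$ out of the Bell polynomial; combined with the leading $a e^{x_1}$, this exactly cancels the $e^{\ell x_1}$ on the left and leaves the bare identity among the $A_m^{(k)}(a)$. Alternatively, one can simply set $x_1 = 0$ from the outset and avoid the homogeneity step altogether. No serious obstacle arises in this argument; the only delicacy is purely bookkeeping, namely verifying that the blocks of zeros between consecutive $A_m^{(k)}(a)$ entries have length exactly $k-1$ (which is immediate from the vanishing condition $k \nmid j$) and reading the statement with $n$ implicitly restricted to multiples of $k$ so that $B_{n-k}$ makes sense.
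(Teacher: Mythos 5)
Your derivation is correct and is precisely the argument the paper intends: the corollary is stated with no proof of its own as an immediate consequence of Theorem \ref{theo_val_f} together with the defining recurrence (\ref{eqn_def_fac}), and your substitution $\x=(x_{1},0,\dots,0)$ followed by the weighted-homogeneity step (or simply taking $x_{1}=0$) is exactly that consequence spelled out. One remark worth recording: carried out carefully, your computation gives $A_{\ell}^{(k)}(a)=a\,B_{k(\ell-1)}\bigl(0,\dots,0,A_{1}^{(k)}(a),\dots,0,\dots,0,A_{\ell-1}^{(k)}(a)\bigr)$, so the identity as printed --- with no prefactor $a$ and with the subscript $n-k$ appearing both on the Bell polynomial and on its last argument --- is only literally correct after setting $a=1$ and reindexing (as the paper itself implicitly does in its later instances for the tangent and Blasius numbers); this is a defect of the statement rather than of your proof.
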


In the following theorem we calculate some special values of the functions
$g_{n.i}(\x,a)$
\begin{theorem}
Let $\x=(x_{1},0,...,0)$. For all $a\in\R$ we have
\begin{enumerate}
    \item $g_{n,i}(\x,a)=0$, si $k\not\vert n$.
    \item $g_{lk,1}(\x,a)=A_{l}^{(k)}(a)e^{lx_{1}}$.
    \item $g_{lk,2}(\x,a)=e^{lx_{1}}\sum_{j=1}^{l}\binom{kl-1}{kj-1}A_{j}^{(k)}(a)A_{l-j}^{(k)}(a)$.
    \item $g_{n,n}(\x,a)=g_{n,n-1}(\x,a)=g_{n,n-2}(\x,a)=g_{n,n-3}(\x,a)=g_{n,n-4}(\x,a)=0$, $k>1$.
\end{enumerate}
\end{theorem}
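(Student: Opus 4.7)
The plan is to dispatch the four parts in turn, exploiting two ingredients: each $g_{n,i}(\x,a)$ is the partial Bell polynomial $B_{n,i}$ evaluated at $(f_{1}(\x,a),f_{2}(\x,a),\ldots)$, and Theorem \ref{theo_val_f} gives the sharp vanishing pattern $f_{j}(\x,a)=0$ whenever $k\nmid j$, together with $f_{lk}(\x,a)=A_{l}^{(k)}(a)e^{lx_{1}}$.

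For part 1, every monomial $\prod_{j} y_{j}^{c_{j}}$ appearing in $B_{n,i}$ has exponent vector satisfying $\sum_{j} j\,c_{j}=n$. Since $f_{j}(\x,a)$ vanishes whenever $k\nmid j$, only monomials with $c_{j}=0$ for each such index survive, forcing $n=\sum_{k\mid j} j\,c_{j}\in k\Z$. Contrapositively, if $k\nmid n$, then $g_{n,i}(\x,a)=0$. Part 2 is immediate from $B_{n,1}(y_{1},\ldots,y_{n})=y_{n}$ combined with Theorem \ref{theo_val_f}(2).

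For part 3, the plan is to start from $B_{n,2}(y_{1},\ldots,y_{n-1})=\tfrac12\sum_{m=1}^{n-1}\binom{n}{m}y_{m}y_{n-m}$, set $n=lk$, substitute $y_{j}=f_{j}(\x,a)$, and retain only the indices $m=kj$ with $1\le j\le l-1$ (for which both $m$ and $lk-m$ are multiples of $k$). This yields
\[
g_{lk,2}(\x,a)\;=\;\frac{e^{lx_{1}}}{2}\sum_{j=1}^{l-1}\binom{lk}{kj}A_{j}^{(k)}(a)A_{l-j}^{(k)}(a).
\]
To match the stated form, I would apply Pascal's rule $\binom{lk}{kj}=\binom{lk-1}{kj-1}+\binom{lk-1}{kj}$ together with the symmetry $\binom{lk-1}{kj}=\binom{lk-1}{k(l-j)-1}$; re-indexing $j\mapsto l-j$ in the second piece shows that the two contributions are equal, so $\binom{lk}{kj}$ becomes $2\binom{lk-1}{kj-1}$ after summation and the factor $\tfrac12$ cancels. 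This binomial rearrangement, together with fixing the convention $A_{0}^{(k)}(a)=0$ needed to extend the sum up to $j=l$, is the main (mild) technical obstacle.

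Part 4 reduces to inspection of the explicit formulas \eqref{eqn_n_n}--\eqref{eqn_n,n-4}: every monomial on the right-hand side of these identities carries a positive power of $x_{1}$ (in the range of $n$ where the closed-form expressions have no degenerate terms), so substituting $y_{1}=f_{1}(\x,a)=0$, which is valid precisely because $k>1$, annihilates each term.
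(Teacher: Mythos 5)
Your argument is correct, but for parts 1 and 3 it takes a genuinely different route from the paper. The paper proves part 1 by induction using the recurrence (\ref{eqn_recu_fap}), and part 3 by a single application of that same recurrence, $g_{lk,2}(\x,a)=\sum_{j}\binom{lk-1}{j-1}f_{j}(\x,a)g_{lk-j,1}(\x,a)$, which produces the coefficients $\binom{lk-1}{kj-1}$ directly and reduces everything to part 2. You instead expand the partial Bell polynomial explicitly: the monomial formula with exponent vectors for part 1 (which is cleaner and more transparent than the paper's rather opaque induction), and the closed form $B_{n,2}=\tfrac12\sum_{m}\binom{n}{m}y_{m}y_{n-m}$ for part 3, at the cost of the Pascal-plus-symmetry rearrangement needed to convert $\tfrac12\binom{lk}{kj}$ into $\binom{lk-1}{kj-1}$; that rearrangement is correct as you describe it, and the convention $A_{0}^{(k)}(a)=0$ you flag is implicitly present in the paper too (there the $j=l$ term is $f_{lk}(\x,a)\,g_{0,1}(\x,a)=0$ by definition). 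What the paper's route buys is brevity and no binomial identities; what yours buys is a self-contained argument not relying on Theorem \ref{theo_recu}. Parts 2 and 4 coincide with the paper's proof. One remark: your parenthetical caveat on part 4 is genuinely needed and the paper omits it --- for small $n$ the closed forms (\ref{eqn_n_n-1})--(\ref{eqn_n,n-4}) contain monomials with $x_{1}^{0}$, e.g.\ $B_{4,2}$ contains $3x_{2}^{2}$, so $g_{4,2}(\x,a)=3a^{2}e^{2x_{1}}\neq 0$ when $k=2$; the statement, and the paper's one-line proof, tacitly assume $n$ large enough relative to the deficiency $n-i$.
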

\begin{proof}
Suppose $k\not\vert n$ and $g_{n-j,i-1}(\x,a)=0$ for all $k$ such that $k\not\vert j$. Using theorem \ref{theo_recu} and \ref{theo_val_f} we have that $f_{j}(\x,a)=0$. This proves 1. To prove 2 we have
\begin{align*}
    g_{lk,1}(\x,a)&=\sum_{j=1}^{lk}\binom{lk-1}{j-1}f_{j}(\x,a)g_{lk-j,0}(\x,a)\\
    &=\binom{lk-1}{lk-1}f_{lk}(\x,a)g_{0,0}(\x,a)\\
    &=A_{l}^{(k)}(a)e^{lx_{1}}.
\end{align*}
On the other hand, 
\begin{align*}
    g_{lk,2}(\x,a)&=\sum_{j=1}^{lk-1}\binom{lk-1}{j-1}f_{j}(\x,a)g_{lk-j,1}(\x,a)\\
    &=\sum_{j=1}^{l}\binom{lk-1}{jk-1}f_{kj}(\x,a)g_{lk-kj,1}(\x,a)\\
    &=\sum_{j=1}^{l}\binom{lk-1}{jk-1}A_{j}^{(k)}(a)e^{jx_{1}}A_{l-j}^{(k)}(a)e^{(l-j)x_{1}}\\
    &=e^{lx_{1}}\sum_{j=1}^{l}\binom{lk-1}{jk-1}A_{j}^{(k)}(a)A_{l-j}^{(k)}(a).
\end{align*}
Then this proves 3. To prove 4 we use the equations (\ref{eqn_n_n})-(\ref{eqn_n,n-4}).
\end{proof}

We conclude this section with the following properties of the exponential autonomous functions
\begin{theorem}\label{theo_contra}
For all $n\geq1$, $k\geq1$ and for all $a,c\in\C$ is fulfilled
\begin{equation}
    f_{n}(x_{1}+k\ln c,cx_{2},...,c^{k-1}x_{k},a)=c^{n}f_{n}(x_{1},x_{2},...,x_{k},a)
\end{equation}
\end{theorem}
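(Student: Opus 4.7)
The plan is to proceed by strong induction on $n$, exploiting the definition of $f_n$ in terms of complete Bell polynomials together with a simple homogeneity property of these polynomials. Write $\mathbf{x}' = (x_1 + k \ln c,\, cx_2,\, c^2 x_3,\, \ldots,\, c^{k-1} x_k)$.

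First I would verify the base cases $n = 1, 2, \ldots, k$ directly from the definition of the complete exponential autonomous functions. For $1 \leq j \leq k-1$ we have $f_j(\mathbf{x}', a) = c^{j} x_{j+1} = c^{j} f_j(\mathbf{x}, a)$, and for $j = k$ we have $f_k(\mathbf{x}', a) = a e^{x_1 + k \ln c} = c^{k} a e^{x_1} = c^{k} f_k(\mathbf{x}, a)$. Notice that these base cases already illustrate why $n = 0$ must be excluded: the shift $x_1 \mapsto x_1 + k \ln c$ is not compatible with a factor of $c^0 = 1$.

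For the inductive step, assume the identity holds for all indices $1, 2, \ldots, n+k-1$ with $n \geq 1$. Applying the recursive definition (\ref{eqn_def_fac}) at $\mathbf{x}'$ gives
\begin{equation*}
f_{n+k}(\mathbf{x}', a) = a e^{x_1 + k \ln c}\, B_n\bigl(f_1(\mathbf{x}', a), \ldots, f_n(\mathbf{x}', a)\bigr) = c^k\, a e^{x_1}\, B_n\bigl(c f_1(\mathbf{x}, a),\, c^2 f_2(\mathbf{x}, a),\, \ldots,\, c^n f_n(\mathbf{x}, a)\bigr),
\end{equation*}
where the second equality uses the inductive hypothesis applied to $f_1, \ldots, f_n$. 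At this stage I invoke the homogeneity property of complete Bell polynomials,
\begin{equation*}
B_n(c y_1,\, c^2 y_2,\, \ldots,\, c^n y_n) = c^n B_n(y_1,\, y_2,\, \ldots,\, y_n),
\end{equation*}
which is easily read off from the exponential generating function by substituting $t \mapsto ct$ in the identity of Definition 1 with $u=1$: both series then represent the same function and matching coefficients of $t^n/n!$ yields the claim. Plugging this into the previous display produces $f_{n+k}(\mathbf{x}', a) = c^{n+k} a e^{x_1} B_n(f_1(\mathbf{x},a), \ldots, f_n(\mathbf{x},a)) = c^{n+k} f_{n+k}(\mathbf{x}, a)$, completing the induction.

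The only potentially subtle point is the Bell polynomial homogeneity identity, but since it follows in one line from the generating function (\ref{eqn_egf}), there is no real obstacle; the argument is essentially a clean induction that bundles the shift in $x_1$ with the weighted scaling of the remaining initial data to absorb both into the generating-function dilation $t \mapsto c t$.
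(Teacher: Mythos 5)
Your proof is correct and follows essentially the same route as the paper: induction on $n$, applying the recursive definition (\ref{eqn_def_fac}) and the homogeneity identity $B_n(cy_1, c^2y_2, \ldots, c^ny_n) = c^n B_n(y_1, \ldots, y_n)$ to absorb the factor $c^{n}$ alongside the $c^k$ coming from $e^{x_1 + k\ln c}$. The only difference is that you spell out the base cases and justify the Bell-polynomial homogeneity from the generating function, both of which the paper leaves implicit.
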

\begin{proof}
Let $\y=(x_{1}+k\ln c,cx_{2},...,c^{k-1}x_{k})$ and $\x=(x_{1},x_{2},...,x_{k})$. Suppose that the result is true for $i\leq n$. Then
\begin{align*}
f_{n+1}(\y,a)&=f_{(n+1-k)+k}(\y,a)\\
&=ac^{k}e^{x_{1}}B_{n+1-k}(f_{1}(\y,a),...,f_{n+1-k}(\y,a))\\
&=ac^{k}e^{x_{1}}B_{n+1-k}(cf_{1}(\x,a),...,c^{n+1-k}f_{n+1-k}(\x,a))\\
&=ac^{k}c^{n+1-k}e^{x_{1}}B_{n}(f_{1}(\x,a),...,f_{n}(\x,a))\\
&=c^{n+1}f_{n+1}(\x,a).
\end{align*}
\end{proof}

The following is the corollary to the theorem \ref{theo_contra} that allows to calculate the solutions of (\ref{eqn_equi}) when the initial values are $(x_{1}+k\ln c,cx_{2},...,c^{k-1}x_{k})$
\begin{corollary}
\begin{equation}
    E_{k}(t,(x_{1}+k\ln c,cx_{2},...,c^{k-1}x_{k}),a)=k\ln c+E_{k}(ct,(x_{1},x_{2},...,x_{k}),a)
\end{equation}
\end{corollary}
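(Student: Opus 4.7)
The plan is to expand both sides as power series in $t$ and match coefficients, using Theorem~\ref{theo_contra} as the engine for $n\geq 1$, while treating the $n=0$ term separately since the scaling relation there fails by an additive constant.

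First I would write out the left-hand side directly from the definition of $E_{k}$. Setting $\y=(x_{1}+k\ln c,cx_{2},\ldots,c^{k-1}x_{k})$, the series is
\begin{equation*}
E_{k}(t,\y,a)=f_{0}(\y,a)+\sum_{n=1}^{\infty}f_{n}(\y,a)\frac{t^{n}}{n!}.
\end{equation*}
The $n=0$ term is special: by definition $f_{0}(\y,a)=x_{1}+k\ln c$, whereas $c^{0}f_{0}(\x,a)=x_{1}$, so the scaling identity of Theorem~\ref{theo_contra} is off by exactly $k\ln c$ at this index. For all $n\geq 1$, however, Theorem~\ref{theo_contra} gives $f_{n}(\y,a)=c^{n}f_{n}(\x,a)$.

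Next I would substitute and reorganize:
\begin{equation*}
E_{k}(t,\y,a)=k\ln c+x_{1}+\sum_{n=1}^{\infty}c^{n}f_{n}(\x,a)\frac{t^{n}}{n!}=k\ln c+f_{0}(\x,a)+\sum_{n=1}^{\infty}f_{n}(\x,a)\frac{(ct)^{n}}{n!},
\end{equation*}
where in the last step I absorb $c^{n}$ into $t^{n}$ to form $(ct)^{n}$. Recognizing the sum on the right as $E_{k}(ct,\x,a)$ finishes the proof.

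There is no real obstacle: the only subtle point is remembering that the scaling identity from Theorem~\ref{theo_contra} is stated for $n\geq 1$ and does not extend to $n=0$, which is precisely where the additive $k\ln c$ in the target identity comes from. Once this is handled, the rest is a one-line rearrangement of the generating function.
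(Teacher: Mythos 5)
Your proof is correct and follows exactly the same route as the paper: split off the $n=0$ term (which carries the extra $k\ln c$), apply Theorem~\ref{theo_contra} for $n\geq 1$, and absorb $c^{n}$ into $(ct)^{n}$ to recognize $E_{k}(ct,\x,a)$. Your explicit remark that the scaling identity holds only for $n\geq1$ is a nice clarification that the paper leaves implicit.
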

\begin{proof}
Let $\y=(x_{1}+k\ln c,cx_{2},...,c^{k-1}x_{k})$ and $\x=(x_{1},x_{2},...,x_{k})$. From the above theorem and the definition of the function $E_{k}(t,\x,a)$ we have
\begin{align*}
E_{k}(t,\y,a)&=x_{1}+k\ln c+\sum_{n=1}^{\infty}f_{n}(\y,a)\frac{t^{n}}{n!}\\
&=x_{1}+k\ln c+\sum_{n=1}^{\infty}c^{n}f_{n}(\x,a)\frac{t^{n}}{n!}\\
&=x_{1}+k\ln c+\sum_{n=1}^{\infty}f_{n}(\x,a)\frac{(ct)^{n}}{n!}\\
&=k\ln c+E_{k}(ct,\x,a).
\end{align*}
\end{proof}

Finally, we compute $f_{n}(\x,a)$ when $\x=(x_{1},-x_{2},...,x_{2k-1},-x_{2k})$
\begin{theorem}\label{theo_alt}
For all $n\geq0$ and for all exponential autonomous functions of order $2k$ it is satisfied that
\begin{equation}
    f_{n}((x_{1},-x_{2},...,x_{2k-1},-x_{2k}),a)=(-1)^{n}f_{n}((x_{1},x_{2},...,x_{2k-1},x_{2k}),a)
\end{equation}
\end{theorem}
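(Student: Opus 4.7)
The plan is a straightforward induction on $n$, leaning on the sign identity (\ref{eqn_bell_signo}) for complete Bell polynomials and on the recurrence (\ref{eqn_def_fac}) that defines $f_{n+2k}$. The crucial structural observation is that the order $2k$ is even, which is exactly what makes the signs line up.

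First I would handle the base cases $0\le n\le 2k$. For $0\le n\le 2k-1$ the value $f_n(\tilde{\x},a)$ (with $\tilde{\x}=(x_1,-x_2,\ldots,x_{2k-1},-x_{2k})$) equals the $(n+1)$-th entry of $\tilde{\x}$, which is $(-1)^n x_{n+1}=(-1)^n f_n(\x,a)$. For $n=2k$ both sides are $ae^{x_1}$ and $(-1)^{2k}=1$, so the identity still holds.

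For the inductive step, suppose the claim is true for all indices $\le N$ with $N\ge 2k$. Writing $m=N+1-2k\ge 1$ and applying (\ref{eqn_def_fac}),
\begin{align*}
f_{N+1}(\tilde{\x},a)&=ae^{x_{1}}B_{m}\bigl(f_{1}(\tilde{\x},a),\ldots,f_{m}(\tilde{\x},a)\bigr)\\
&=ae^{x_{1}}B_{m}\bigl(-f_{1}(\x,a),\,f_{2}(\x,a),\ldots,(-1)^{m}f_{m}(\x,a)\bigr),
\end{align*}
where the second equality uses the inductive hypothesis $f_i(\tilde{\x},a)=(-1)^i f_i(\x,a)$ for $i\le m\le N$. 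Applying (\ref{eqn_bell_signo}) to the Bell polynomial on the right produces a factor of $(-1)^{m}$, so
\begin{equation*}
f_{N+1}(\tilde{\x},a)=(-1)^{m}\,ae^{x_{1}}B_{m}\bigl(f_{1}(\x,a),\ldots,f_{m}(\x,a)\bigr)=(-1)^{m}f_{N+1}(\x,a).
\end{equation*}
Because $2k$ is even, $(-1)^{m}=(-1)^{N+1-2k}=(-1)^{N+1}$, completing the induction.

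There is no real obstacle here, only a bookkeeping check: the parity of the index passed to each $f_i$ must match the parity pattern assumed by (\ref{eqn_bell_signo}), and this is automatic precisely because the order of the equation is even. Had the order been odd, the sign pattern $(-1)^{m}$ coming out of the Bell polynomial identity would disagree with $(-1)^{N+1}$ by the extra factor $(-1)^{\text{odd}}$, and the theorem would fail; this explains why the statement is phrased for $2k$ rather than for a general order.
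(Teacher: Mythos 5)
Your proof is correct and follows essentially the same route as the paper's: strong induction on $n$, rewriting $f_{N+1}$ via the defining recurrence (\ref{eqn_def_fac}) and then extracting the sign $(-1)^{N+1-2k}=(-1)^{N+1}$ from the Bell polynomial identity (\ref{eqn_bell_signo}), with evenness of the order $2k$ making the parities match. The only difference is that you spell out the base cases $0\le n\le 2k$ explicitly, which the paper leaves implicit; that is a welcome addition but not a different argument.
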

\begin{proof}
Let $\y=(x_{1},-x_{2},...,x_{2k-1},-x_{2k})$, and let $\x=(x_{1},x_{2},...,x_{2k-1},x_{2k})$. Suppose it is true for all values less than or equal to $n$. Then
\begin{eqnarray*}
f_{n+1}(\y,a)&=&f_{(n+1-2k)+2k}(\y,a)\\
&=&ae^{x_{1}}B_{n+1-2k}(f_{1}(\y,a),f_{2}(\y,a),...,f_{n+1-2k}(\y,a))\\
&=&ae^{x_{1}}B_{n+1-2k}(-f_{1}(\x,a),f_{2}(\x,a),...,(-1)^{n+1-2k}f_{n+1-2k}(\x,a))\\
&=&(-1)^{n+1-2k}ae^{x_{1}}B_{n+1-2k}(f_{1}(\x,a),f_{2}(\x,a),...,f_{n+1-2k}(\x,a))\\
&=&(-1)^{n+1}f_{n}(\x,a).
\end{eqnarray*}
\end{proof}

Finally we have the corollary to theorem \ref{theo_alt}

\begin{corollary}\label{cor_sig_alt}
\begin{equation}
E_{2k}(-t,(x_{1},-x_{2},...,x_{2k-1},-x_{2k}),a)=E_{2k}(t,(x_{1},x_{2},...,x_{2k-1},x_{2k}),a).
\end{equation}
\end{corollary}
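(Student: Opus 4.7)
The plan is to derive the corollary as an essentially immediate consequence of Theorem \ref{theo_alt} together with the power series definition of $E_{2k}$. Set $\y=(x_{1},-x_{2},\ldots,x_{2k-1},-x_{2k})$ and $\x=(x_{1},x_{2},\ldots,x_{2k-1},x_{2k})$. First I would simply write out the left-hand side using the definition of $E_{2k}$, getting
\[
E_{2k}(-t,\y,a)=\sum_{n=0}^{\infty}f_{n}(\y,a)\frac{(-t)^{n}}{n!}=\sum_{n=0}^{\infty}(-1)^{n}f_{n}(\y,a)\frac{t^{n}}{n!}.
\]

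Next I would apply Theorem \ref{theo_alt}, which tells us that $f_{n}(\y,a)=(-1)^{n}f_{n}(\x,a)$ for every $n\geq0$. Substituting this gives the coefficient $(-1)^{n}\cdot(-1)^{n}f_{n}(\x,a)=f_{n}(\x,a)$, so the two sign factors cancel term by term. The resulting series is exactly $\sum_{n\geq0}f_{n}(\x,a)\,t^{n}/n!=E_{2k}(t,\x,a)$, which is the right-hand side of the desired identity.

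There is no real obstacle here — the work is carried entirely by Theorem \ref{theo_alt}, and the corollary is only repackaging that identity into the language of the generating function $E_{2k}$. The only small thing to be careful about is the $n=0$ term, but since $f_{0}(\y,a)=x_{1}=f_{0}(\x,a)$ and $(-1)^{0}=1$, the formula holds trivially at $n=0$ as well. Thus the proof will be a two- or three-line display chain with a single appeal to the preceding theorem.
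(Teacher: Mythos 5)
Your proposal is correct and follows exactly the same route as the paper: expand $E_{2k}(-t,\y,a)$ as a power series, substitute $f_{n}(\y,a)=(-1)^{n}f_{n}(\x,a)$ from Theorem \ref{theo_alt}, and observe that the two sign factors cancel termwise. No differences worth noting.
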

\begin{proof}
From the above theorem and the definition of the function $E_{2k}(t,x,a)$ we have
\begin{align*}
E_{2k}(-t,\y,a)&=\sum_{n=0}^{\infty}f_{n}(\y,a)\frac{(-t)^{n}}{n!}\\
&=\sum_{n=0}^{\infty}(-1)^{n}(-1)^{n}f_{n}(\x,a)\frac{t^{n}}{n!}\\
&=\sum_{n=0}^{\infty}f_{n}(\x,a)\frac{t^{n}}{n!}\\
&=E_{2k}(t,\x,a)
\end{align*}
where $\y=(x_{1},-x_{2},...,x_{2k-1},-x_{2k})$ and $\x=(x_{1},x_{2},...,x_{2k-1},x_{2k})$.
\end{proof}

\section{(k,a)-autonomous coefficients}

When $k=1$ we obtain the equation $y^{\prime}=ae^{y}$, which is the easiest to solve for all $k$. Using the method of separation of variables we reach the solution
\begin{equation*}
    y(t)=-\ln(e^{-x}-at)
\end{equation*}
with initial condition $y(0)=x$. On the other hand, by the theorem \ref{theo_sol} the solution in power series becomes
\begin{align*}
E_{1}(t,x,a)&=x+\sum_{n=1}^{\infty}A_{n}^{(1)}(a)\frac{t^{n}}{n!}\\
&=x+\sum_{n=1}^{\infty}(n-1)!a^{n}e^{nx}\frac{t^{n}}{n!}\\
&=x+\sum_{n=1}^{\infty}\frac{(ae^{x})^{n}}{n}=x-\ln(1-ae^{x}t)
\end{align*}
Now we can use the results of the previous section to prove some results already known. By the definition of complete exponential autonomous functions
\begin{align*}
n!a^{n+1}e^{a(n+1)x}&=ae^{x}\sum_{i=1}^{n}B_{n,i}(0!a^{1}e^{x},1!a^{2}e^{2x},...,(n-i)!a^{n-i+1}e^{(n-i+1)x})\\
&=ae^{x}a^{n}e^{nx}\sum_{i=1}^{n}B_{n,i}(0!,1!,...,(n-i)!)\\
&=e^{a(n+1)x}a^{n+1}\sum_{i=1}^{n}B_{n,i}(0!,1!,...,(n-i)!)
=e^{a(n+1)x}a^{n+1}\sum_{i=1}^{n}\stirlingfk{n}{i}
\end{align*}
from which follows the result relating factorials and Stirling number of first kind
\begin{equation}\label{eqn_fac_stir}
    n!=\sum_{i=1}^{n}\stirlingfk{n}{i}.
\end{equation}
Furthermore $g_{n,i}(\x,1)=\stirlingfk{n}{i}$ and by the equation (\ref{eqn_recu_fap}) we obtain the following finite-sum identity
\begin{align*}
    \stirlingfk{n+1}{i+1}&=\sum_{j=1}^{n-i+1}\binom{n}{j-1}(j-1)!\stirlingfk{n+1-j}{i}\\
    &=\sum_{j=n}^{i}\frac{n!}{j!}\stirlingfk{j}{i}\\
    &=\sum_{j=i}^{n}\frac{n!}{j!}\stirlingfk{j}{i}\\
    &=\sum_{j=0}^{n}\frac{n!}{j!}\stirlingfk{j}{i}.
\end{align*}
On the other hand, from the equation (\ref{eqn_recu_fac}) we obtain the trivial result
\begin{align*}
(n+1)!&=\sum_{i=0}^{n}\binom{n}{i}(n-i)!i!\\
&=\sum_{i=0}^{n}n!.
\end{align*}
The Stirling numbers of the first kind originally arose algebraically from the expansion of the falling factorial
\begin{equation*}
    (x)_{n}=x(x-1)(x-2)\cdots(x-n+1)
\end{equation*}
and in polynomial form is as follows
\begin{equation*}
    (x)_{n}=\sum_{i=0}^{n}(-1)^{n-k}\stirlingfk{n}{i}x^{i}.
\end{equation*}
Analogously, we want to define and study the coefficients of the expansion of the autonomous exponential polynomials $q_{n}(\x,a)$ with $\x=(0,x,x,...,x)$. First we calculate the degree of $q_{n}(\x,a)$.

\begin{proposition}
Let $\x=(0,x,x,...,x)$. Then the degree $q_{n}(\x,a)$ of
\begin{equation}
\gr(q_{n}(\x,a))=n-k,\ \ \ n\geq k.
\end{equation}
\end{proposition}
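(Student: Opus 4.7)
The plan is to induct on $n$, applying the recurrence \eqref{eqn_def_fac} at $\x=(0,x,\ldots,x)$ to reduce the computation of $\gr(q_n)$ to a degree count inside the complete Bell polynomial $B_{n-k}$. Specializing \eqref{eqn_def_fac} with $x_1=0$ and $x_2=\cdots=x_k=x$ yields $q_{m+k}=a\,B_m(q_1,\ldots,q_m)$ for $m\geq 1$, together with $q_k=a$ read off directly from $f_k=ae^{x_1}$. The base case $n=k$ is then immediate: $\gr(q_k)=\gr(a)=0=k-k$.

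For the inductive step, I fix $n>k$ and assume the claim for all indices in $[k,n)$. Combined with the initial values $q_1=q_2=\cdots=q_{k-1}=x$, this supplies the key uniform estimate
\begin{equation*}
\gr(q_j)\leq j,\qquad\text{with equality if and only if } j=1,
\end{equation*}
valid for every $1\leq j\leq n-k$: for $j<k$ one has $\gr(q_j)=1\leq j$, with equality only at $j=1$; for $j=k$ one has $\gr(q_k)=0<k$; and for $k<j<n$ one has $\gr(q_j)=j-k<j$ by induction.

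Now $B_{n-k}(y_1,\ldots,y_{n-k})$ is a sum of monomials $y_1^{c_1}\cdots y_{n-k}^{c_{n-k}}$ indexed by tuples $(c_j)$ with $\sum_j jc_j=n-k$. Substituting $y_j=q_j$, each such term has $x$-degree
\begin{equation*}
\sum_j c_j\,\gr(q_j)\;\leq\;\sum_j jc_j\;=\;n-k,
\end{equation*}
and the uniform estimate forces equality only when $c_j=0$ for all $j\geq 2$, i.e.\ when $c_1=n-k$. Thus the unique monomial of maximal degree is $y_1^{n-k}$, which by \eqref{eqn_n_n} enters $B_{n-k}$ with coefficient $1$ and evaluates to $x^{n-k}$. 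Multiplying by $a$ produces a nonzero leading term $ax^{n-k}$ in $q_n$, hence $\gr(q_n)=n-k$, closing the induction.

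The only real obstacle is the mismatch between the degree formulas for $j<k$ and for $j\geq k$; the uniform bound $\gr(q_j)\leq j$ with equality only at $j=1$ is precisely what collapses the two regimes into a single clean estimate and isolates a unique top-degree monomial, ruling out any cancellation among leading contributions.
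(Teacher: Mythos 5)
Your proof is correct, and it takes a genuinely different route from the paper's. The paper expands $q_{n+k}(\x,a)$ via the partial autonomous functions $g_{n,i}(\x,a)=B_{n,i}(q_{1},\ldots,q_{n-i+1})$, asserts the degree bound $\gr(g_{n,i}(\x,a))\leq i$, and reads off the top term from $g_{n,n}=q_{1}^{n}=x^{n}$; there is no induction. You instead induct on $n$ and count degrees monomial-by-monomial inside the complete Bell polynomial $B_{n-k}$, using the weight constraint $\sum_{j}jc_{j}=n-k$ together with the uniform estimate $\gr(q_{j})\leq j$ with equality only at $j=1$. Your version is the more defensible one: the paper's intermediate claim $\gr(g_{n,i})\leq i$ is false in general as stated --- already for $k=2$ and $\x=(0,x)$ one has $g_{4,1}(\x,a)=f_{4}(0,x,a)=a(a+x^{2})$, of degree $2>1$ --- whereas your inequality $\sum_{j}c_{j}\gr(q_{j})\leq\sum_{j}jc_{j}$ is exactly the bound that survives, and it isolates the same unique leading monomial $q_{1}^{n-k}=x^{n-k}$ coming from $B_{n-k,n-k}$ in \eqref{eqn_n_n}, so no cancellation can occur in the top degree. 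Your argument also handles the range $k<n<2k$ cleanly through the $j<k$ regime of the uniform estimate, which the paper glosses over. The only caveat, common to both arguments, is the implicit assumption $a\neq 0$ so that the leading coefficient of $ax^{n-k}$ does not vanish; this is consistent with the paper's conventions but worth a word.
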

\begin{proof}
By definition
\begin{align*}
q_{n+k}(\x,a)&=\sum_{i=1}^{n-1}a^{i}g_{n,i}(\x,a)+a^{n}g_{n,n}(\x,a)\\
&=\sum_{i=1}^{n-1}a^{i}g_{n,i}(\x,a)+a^{n}x_{1}^{n}.
\end{align*}
As $\gr(g_{n,i}(\x,a))\leq i$, then $\gr(q_{n+k}(\x,a))=n$.
\end{proof}

We now define the autonomous polynomials and autonomous coefficients

\begin{definition}
Let $A_{n}^{(k)}(x,a)=q_{n}(0,x,...,x,a)$ denote the \textit{autonomous polynomials} of degree $n-k$ for all $n\geq k$.
\end{definition}

Using (\ref{eqn_def_fac}) we note that
\begin{equation}\label{eqn_pol_auto}
    A_{n+k}^{(k)}(x,a)=aB_{n}(A_{1}^{(k)}(x,a),...,A_{n}^{(k)}(x,a))
\end{equation}
for all $n\geq1$. 

\begin{definition}
We define the \textit{$(k,a)$-autonomous coefficients}, denoted by $\cfauto{n}{i}_{(k,a)}$, as the coefficients of the autonomous polynomials $A_{n+k}^{(k)}(x,a)$, i.e.,
\begin{equation}
    A_{n+k}^{(k)}(x,a)=\sum_{i=0}^{n}\cfauto{n}{i}_{(k,a)}x^{i}.
\end{equation}
\end{definition}

Now we will give some values of the $(k,a)$-autonomous coefficients
\begin{theorem}\label{theo_val}
Some values of the coefficients $\cfauto{n}{i}_{(k,a)}$ are
\begin{align}
\cfauto{n}{0}_{(k,a)}&=
\begin{cases}
0,& \text{ si $k\not\vert n$;}\\
a^{n/k}A_{n/k}^{(k)}(1),& \text{ si $k\vert n$} 
\end{cases}\label{eqn_sti_n_0}\\
\cfauto{0}{i}_{(k,a)}&=0, \text{ si $i\geq1$},\label{eqn_sti_0_i}\\
\cfauto{n}{n-l}_{(k,a)}&=a\stirlingsk{n}{n-l},\ k> l+1,\ 0\leq l<n,\label{eqn_sti_n_n-l}
\end{align}
\end{theorem}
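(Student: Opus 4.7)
The plan is to handle the three claims in order of increasing difficulty, all starting from the recurrence $A_{n+k}^{(k)}(x,a)=aB_n(f_1,\ldots,f_n)$ in (\ref{eqn_pol_auto}), with $f_j$ abbreviating $f_j(0,x,x,\ldots,x,a)$. The easy identity (\ref{eqn_sti_0_i}) is immediate from $A_k^{(k)}(x,a)=f_k(0,x,\ldots,x,a)=ae^{0}=a$, a constant polynomial in $x$, so every coefficient indexed by $i\geq 1$ must vanish. For (\ref{eqn_sti_n_0}) I would specialize $x=0$ in $A_{n+k}^{(k)}(x,a)=\sum_i\cfauto{n}{i}_{(k,a)}x^i$ to extract $\cfauto{n}{0}_{(k,a)}=A_{n+k}^{(k)}(0,a)=f_{n+k}(0,0,\ldots,0,a)$, and then invoke Theorem \ref{theo_val_f}: the right-hand side vanishes unless $k\mid n+k$, equivalently $k\mid n$, in which case it is a value of the sequence $A_{\cdot}^{(k)}(a)$, which the scaling identity $A_m^{(k)}(a)=a^mA_m^{(k)}(1)$ from the preceding proposition reduces to the announced form.

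The substantive claim (\ref{eqn_sti_n_n-l}) would be proved by degree tracking in (\ref{eqn_pol_auto}). As polynomials in $x$, the generators satisfy $\deg f_j=1$ for $1\leq j<k$ (indeed $f_1=\cdots=f_{k-1}=x$ outright), $\deg f_k=0$, and $\deg f_j=j-k$ for $j\geq k$ (by the preceding degree proposition). Expanding $B_n(f_1,\ldots,f_n)=\sum_i B_{n,i}(f_1,\ldots,f_{n-i+1})$ as a sum over multiplicities $(c_j)$ with $\sum jc_j=n$ and $\sum c_j=i$, each monomial has $x$-degree $\sum_{j<k}c_j+\sum_{j\geq k}(j-k)c_j$, and subtracting this from $n=\sum jc_j$ gives the codegree
\[
l:=n-\deg=\sum_{j<k}(j-1)c_j+k\sum_{j\geq k}c_j.
\]

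Under the hypothesis $k>l+1$ we have $l<k$, and since both summands on the right above are non-negative and the second is a multiple of $k$, it must vanish, forcing $c_j=0$ for every $j\geq k$. The remaining constraints $\sum_{j<k}(j-1)c_j=l$ together with $\sum_{j<k}jc_j=n$ then give $\sum_{j<k}c_j=n-l$, so the only contributing stratum of $B_n$ is $B_{n,n-l}$; since $l+1\leq k-1$, its arguments $f_1,\ldots,f_{l+1}$ are all literally equal to $x$. Homogeneity of the partial Bell polynomials together with the special value $B_{n,n-l}(1,\ldots,1)=\stirlingsk{n}{n-l}$ then identifies the contribution as $\stirlingsk{n}{n-l}x^{n-l}$, and the overall prefactor $a$ yields (\ref{eqn_sti_n_n-l}). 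The main obstacle is the bookkeeping that rules out partitions with a positive $c_j$ for some $j\geq k$; once the strict inequality $k>l+1$ is used to eliminate them, the rest reduces to the classical evaluation $B_{n,n-l}(1,\ldots,1)=\stirlingsk{n}{n-l}$.
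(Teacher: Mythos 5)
Your proposal is correct and follows the same basic route as the paper's proof, but it is more complete in the one place where the paper is genuinely terse. For (\ref{eqn_sti_n_n-l}) the paper only computes the diagonal stratum, $B_{n,n-l}(A_{1}^{(k)}(x,a),\ldots,A_{l+1}^{(k)}(x,a))=B_{n,n-l}(x,\ldots,x)=\stirlingsk{n}{n-l}x^{n-l}$, and asserts the claim; it never argues that no other stratum $B_{n,i}$ with $i\neq n-l$ contributes to the coefficient of $x^{n-l}$. Your codegree computation, $n-\deg=\sum_{j<k}(j-1)c_{j}+k\sum_{j\geq k}c_{j}\geq k>l$ whenever some $c_{j}>0$ with $j\geq k$, is exactly the missing bookkeeping; just phrase it as an upper bound on the degree of the product $\prod_{j}f_{j}^{c_{j}}$ rather than as ``the degree of each monomial,'' since $f_{j}$ for $j>k$ is a polynomial with several monomials --- the inequality is all you need to exclude those partitions. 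Your treatments of (\ref{eqn_sti_0_i}) and (\ref{eqn_sti_n_0}) coincide with the paper's. One caveat on (\ref{eqn_sti_n_0}): carried out honestly, your own computation gives $\cfauto{n}{0}_{(k,a)}=A_{n+k}^{(k)}(0,a)=f_{n+k}(0,\ldots,0,a)=A_{n/k+1}^{(k)}(a)$ when $k\mid n$, which is \emph{not} the announced value $a^{n/k}A_{n/k}^{(k)}(1)=A_{n/k}^{(k)}(a)$; the stated formula is off by one in the index, as the paper's own table of $(2,a)$-coefficients confirms ($\cfauto{2}{0}_{(2,a)}=a^{2}=A_{2}^{(2)}(a)$, not $A_{1}^{(2)}(a)=a$). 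So you should not claim that your value ``reduces to the announced form''; the honest conclusion of your argument is the corrected identity.
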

\begin{proof}
The equation (\ref{eqn_sti_n_0}) follows from theorem \ref{theo_val_f}.  By definition, $B_{0,i}=0$ for $i\geq1$. Then (\ref{eqn_sti_0_i}) is true. Finally, if $k>l+1$,
\begin{align*}
    B_{n,n-l}(A_{1}^{(k)}(x,a),...,A_{l+1}^{(k)}(x,a))&=B_{n,n-l}(x,...,x)\\
    &=\stirlingsk{n}{n-l}x^{n-l}
\end{align*}
and from here follows (\ref{eqn_sti_n_n-l}).
\end{proof}
We will now show the relationship between the $(k,1)$-autonomous coefficients and the binomial coefficients
\begin{theorem}
\begin{align}
    \cfauto{n+1}{0}_{(k,1)}&=\binom{n}{k-1}\cfauto{n+1-k}{0}_{(k,1)}\nonumber\\
    &+\sum_{h=k+1}^{n}\binom{n}{h}\cfauto{n-h}{0}_{(k,1)}\cfauto{h+1-k}{0}_{(k,1)},
\end{align}
for $1\leq i\leq n-k+3$
\begin{align}
    \cfauto{n+1}{i}_{(k,1)}&=\sum_{h=0}^{k-2}\binom{n}{h}\cfauto{n-h}{i-1}_{(k,1)}\nonumber\\
    &+\binom{n}{k-1}\cfauto{n+1-k}{i}_{(k,1)}+\binom{n}{k}\cfauto{n-k}{i-1}_{(k,1)}\nonumber\\
    &+\sum_{h=k+1}^{n}\binom{n}{h}\sum_{j+l=i}\cfauto{n-h}{j}_{(k,1)}\cfauto{h+1-k}{l}_{(k,1)}
\end{align}
and for $n-k+4\leq i\leq n+1$
\begin{align}
    \cfauto{n+1}{i}_{(k,1)} &=\sum_{h=0}^{n-i+1}\binom{n}{h}\cfauto{n-h}{i-1}_{(k,1)}\nonumber\\
    &+\binom{n}{k-1}\cfauto{n+1-k}{i}_{(k,1)}+\binom{n}{k}\cfauto{n-k}{i-1}_{(k,1)}\nonumber\\
    &+\sum_{h=k+1}^{n}\binom{n}{h}\sum_{j+l=i}\cfauto{n-h}{j}_{(k,1)}\cfauto{h+1-k}{l}_{(k,1)}
\end{align}
\end{theorem}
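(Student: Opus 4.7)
The plan is to specialize the recurrence (\ref{eqn_recu_fac}) of Theorem \ref{theo_recu}, namely $f_{n+k+1}(\x,a)=\sum_{h=0}^n\binom{n}{h}f_{n-h+k}(\x,a)f_{h+1}(\x,a)$, at $\x=(0,x,\ldots,x)$ and $a=1$, and then compare coefficients of $x^i$ on both sides against $A^{(k)}_{n+k+1}(x,1)=\sum_{i=0}^{n+1}\cfauto{n+1}{i}_{(k,1)}x^i$.

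At this choice of arguments the factor $f_{h+1}(\x,1)=q_{h+1}$ takes four different forms, which forces a split of the outer sum into four pieces: $q_{h+1}=x$ for $0\leq h\leq k-2$ (the non-trivial initial data $x_2=\cdots=x_k=x$); $q_k=1$ for $h=k-1$; $q_{k+1}=B_1(q_1)=x$ for $h=k$; and $q_{h+1}=A_{h+1}^{(k)}(x,1)$, a genuine polynomial of degree $h+1-k$, for $h\geq k+1$. The other factor $f_{n-h+k}$ always has index at least $k$, so it equals $A_{n-h+k}^{(k)}(x,1)=\sum_j\cfauto{n-h}{j}_{(k,1)}x^j$.

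Substituting these expansions and multiplying out, the three linear pieces produce exactly the summands $\sum_{h=0}^{k-2}\binom{n}{h}\cfauto{n-h}{i-1}_{(k,1)}$, $\binom{n}{k-1}\cfauto{n+1-k}{i}_{(k,1)}$, and $\binom{n}{k}\cfauto{n-k}{i-1}_{(k,1)}$ appearing in the theorem (the first and third coming from multiplication by $x$, the second from multiplication by $1$), while the final piece $h\geq k+1$ contributes the convolution $\sum_{h=k+1}^{n}\binom{n}{h}\sum_{j+l=i}\cfauto{n-h}{j}_{(k,1)}\cfauto{h+1-k}{l}_{(k,1)}$. Reading off the coefficient of $x^i$ on the left-hand side yields the master identity.

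The three listed cases are then a bookkeeping matter. When $i=0$, every summand carrying a factor of the form $\cfauto{\cdot}{i-1}_{(k,1)}=\cfauto{\cdot}{-1}_{(k,1)}$ vanishes, leaving only the $\binom{n}{k-1}$ term together with the convolution specialized to $j=l=0$; this is the first displayed identity. When $1\leq i\leq n-k+3$ the inequality $i-1\leq n-h$ holds throughout $0\leq h\leq k-2$, so no implicit cancellation occurs and the sum is as written; when $n-k+4\leq i\leq n+1$, the indices $h>n-i+1$ push $i-1$ beyond the degree $n-h$ of $A^{(k)}_{n-h+k}(x,1)$, so the corresponding $\cfauto{n-h}{i-1}_{(k,1)}$ are zero, which amounts to truncating the first sum to $h\leq n-i+1$. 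The only genuine subtlety is keeping track of the two special indices $h=k-1$ and $h=k$, where $f_{h+1}$ is constant or a pure monomial rather than a generic polynomial; all other steps are routine index manipulation.
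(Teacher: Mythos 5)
Your proposal is correct and takes essentially the same route as the paper: both specialize the recurrence (\ref{eqn_recu_fac}) to $\x=(0,x,\ldots,x)$, $a=1$, split the sum into the four pieces $0\leq h\leq k-2$, $h=k-1$, $h=k$, and $k+1\leq h\leq n$ according to whether $A^{(k)}_{h+1}(x,1)$ equals $x$, $1$, $x$, or a general polynomial, and then compare coefficients of $x^{i}$. Your degree-based explanation of why the first sum truncates to $h\leq n-i+1$ in the range $n-k+4\leq i\leq n+1$ matches the paper's rearrangement of the first and fourth sums.
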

\begin{proof}
As 
\begin{align*}
    A_{n+1+k}^{(k)}(x,1)&=\sum_{i=0}^{n}\binom{n}{i} A_{n-i+k}^{(k)}(x,1)A_{i+1}^{(k)}(x,1)\\
    &=\sum_{i=0}^{k-2}\binom{n}{i}A_{n-i+k}^{(k)}(x,1)x+\binom{n}{k-1}A_{n+1}^{(k)}(x,1)\\
    &+\binom{n}{k}A_{n}^{(k)}(x,1)x+\sum_{i=k+1}^{n}\binom{n}{i} A_{n-i+k}^{(k)}(x,1)A_{i+1}^{(k)}(x,1),
\end{align*}
then
\begin{align*}
    \sum_{i=0}^{n+1}\cfauto{n+1}{i}_{(k,1)}x^{i}&=\sum_{i=0}^{k-2}\binom{n}{i}\sum_{j=0}^{n-i}\cfauto{n-i}{j}_{(k,1)}x^{j+1}\\
    &+\binom{n}{k-1}\sum_{j=0}^{n+1-k}\cfauto{n+1-k}{j}_{(k,1)}x^{j}
    +\binom{n}{k}\sum_{j=0}^{n-k}\cfauto{n-k}{j}_{(k,1)}x^{j+1}\\
    &+\sum_{i=k+1}^{n}\binom{n}{i}\left(\sum_{j=0}^{n-i}\cfauto{n-i}{j}_{(k,1)}x^{j}\sum_{l=0}^{i+1-k}\cfauto{i+1-k}{l}_{(k,1)}x^{l}\right).
\end{align*}
We multiply the two autonomous polynomials within the last sum
\begin{align*}
\sum_{i=0}^{n+1}\cfauto{n+1}{i}_{(k,1)}x^{i}
    &=\sum_{i=0}^{k-2}\binom{n}{i}\sum_{j=0}^{n-i}\cfauto{n-i}{j}_{(k,1)}x^{j+1}\\
    &+\binom{n}{k-1}\sum_{j=0}^{n+1-k}\cfauto{n+1-k}{j}_{(k,1)}x^{j}
    +\binom{n}{k}\sum_{j=0}^{n-k}\cfauto{n-k}{j}_{(k,1)}x^{j+1}\\
    &+\sum_{i=k+1}^{n}\binom{n}{i}\sum_{h=0}^{n+1-k}\left(\sum_{j+l=h}\cfauto{n-i}{j}_{(k,1)}\cfauto{i+1-k}{l}_{(k,1)}\right)x^{h}.
\end{align*}
Then by rearranging the first and fourth sums we obtain
\begin{align*}
    \sum_{i=0}^{n+1}\cfauto{n+1}{i}_{(k,1)}x^{i}
    &=\sum_{i=0}^{n-k+3}\left(\sum_{h=0}^{k-2}\binom{n}{h}\cfauto{n-h}{i-1}_{(k,1)}\right)x^{i}\\
    &+\sum_{i=n-k+4}^{n+1}\left(\sum_{h=0}^{n+1-i}\binom{n}{h}\cfauto{n-h}{i-1}_{(k,1)}\right)x^{i}\\
    &+\binom{n}{k-1}\sum_{i=0}^{n+1-k}\cfauto{n+1-k}{i}_{(k,1)}x^{i}\\
    &+\binom{n}{k}\sum_{i=1}^{n-k+1}\cfauto{n-k}{i-1}_{(k,1)}x^{i}\\
    &+\sum_{i=0}^{n+1-k}\left(\sum_{h=k+1}^{n}\binom{n}{h}\sum_{j+l=i}\cfauto{n-h}{j}_{(k,1)}\cfauto{h+1-k}{l}_{(k,1)}\right)x^{i}.
\end{align*}
For a suitable value of $i$ the desired results are attained.
\end{proof}

Finally, we show without proof the relationship between the $(k,1)$-autonomous coefficients and the Stirling numbers of second kind. 
\begin{conjecture}
Suppose that $A_{1}^{(k)}(1,1)=\cdots=A_{k}^{(k)}(1,1)=1$. Then
\begin{equation}
    B_{n}(A_{1}^{(k)}(1,1),...,A_{n}^{(k)}(1,1))=\sum_{i=1}^{n}\stirlingsk{n}{i}A_{i}^{(k)}(1,1),\ \ n\geq1
\end{equation}
Then,
\begin{equation}\label{eqn_conj1_1}
    A_{n+k}^{(k)}(1,1)=\sum_{i=1}^{n}\stirlingsk{n}{i}A_{i}^{(k)}(1,1),\ \ n\geq1
\end{equation}
and
\begin{equation}\label{eqn_conj1_2}
    \sum_{i=0}^{n}\cfauto{n}{i}_{(k,1)}=\sum_{j=1}^{k}\stirlingsk{n}{j}+\sum_{j=k+1}^{n}\stirlingsk{n}{j}\sum_{i=0}^{j-k}\cfauto{n}{i}_{(k,1)}
\end{equation}
\end{conjecture}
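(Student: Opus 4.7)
First, I would recast the identity as a functional equation for generating functions and then close an induction using the differential equation satisfied by the relevant EGF. Set $F(t):=\sum_{n\ge 1}A_n^{(k)}(1,1)\,t^n/n!$. Using the standard identity $\sum_{n\ge 0}\stirlingsk{n}{i}t^n/n! = (e^t-1)^i/i!$, the right-hand side of (\ref{eqn_conj1_1}) is the $n!$-times-$t^n$-coefficient of $F(e^t-1)$, while by (\ref{eqn_egf}) at $u=1$ the left-hand side $B_n(A_1,\ldots,A_n)$ is that of $e^{F(t)}-1$. Thus (\ref{eqn_conj1_1}) is equivalent to the functional equation $e^{F(t)}-1=F(e^t-1)$, and since Theorem~\ref{theo_sol} gives $F^{(k)}(t)=e^{F(t)}$, the task reduces to proving
\[
F^{(k)}(t)-1=F(e^t-1),
\]
that is, that $F$ commutes with $\phi(t)=e^t-1$ in the sense $\phi\circ F=F\circ\phi$.

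Second, I would differentiate the right-hand side using Fa\`a di Bruno applied to $F\circ\phi$. Since $\phi^{(m)}(t)=e^t$ for every $m\ge 1$ and $B_{m,i}(e^t,\ldots,e^t)=\stirlingsk{m}{i}e^{it}$, the formula collapses to
\[
\frac{d^m}{dt^m}F(e^t-1)=\sum_{i=1}^{m}\stirlingsk{m}{i}e^{it}F^{(i)}(e^t-1),
\]
which already exposes the Stirling numbers of the second kind appearing in the conjecture. The plan is then an induction on $n$: assuming (\ref{eqn_conj1_1}) below order $n$, match the $t^n$-coefficient of $F(e^t-1)$, read off from the display above via Leibniz at $t=0$, against the $t^n$-coefficient of $F^{(k)}(t)-1$, read off from the recurrences (\ref{eqn_recu_fac}) and (\ref{eqn_pol_auto}).

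Third, once (\ref{eqn_conj1_1}) is in hand, (\ref{eqn_conj1_2}) is immediate: evaluating the polynomial identity $A_{n+k}^{(k)}(x,1)=\sum_i\cfauto{n}{i}_{(k,1)}x^i$ at $x=1$ gives $\sum_{i=0}^{n}\cfauto{n}{i}_{(k,1)}=A_{n+k}^{(k)}(1,1)=\sum_{j=1}^{n}\stirlingsk{n}{j}A_j^{(k)}(1,1)$, and one splits the rightmost sum according to $A_j^{(k)}(1,1)=1$ for $1\le j\le k$ versus $A_j^{(k)}(1,1)=\sum_{i=0}^{j-k}\cfauto{j-k}{i}_{(k,1)}$ for $j>k$.

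The main obstacle is the inductive step itself. The Bell polynomial $B_n(A_1,\ldots,A_n)$ is genuinely nonlinear in the $A_i$, so no surface manipulation reduces it to the linear combination $\sum_i\stirlingsk{n}{i}A_i$; any proof must exploit the specific nonlinear structure forced by $y^{(k)}=e^y$ with the initial data $A_1=\cdots=A_k=1$. The cleanest route I foresee is to verify that both $F^{(k)}(t)-1$ and $F(e^t-1)$ satisfy the same formal $k$th-order initial value problem at $t=0$ and invoke uniqueness of the formal power series solution; the work then reduces to the algebraic elimination of $F(e^t-1)$ from the right-hand side of the Fa\`a di Bruno collapse above in favor of $F^{(k)}(t)-1$ and its lower derivatives, which I expect to be the combinatorially substantive step.
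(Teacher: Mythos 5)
You should first be aware that the paper contains no proof of this statement: it is explicitly offered as a conjecture, ``shown without proof,'' so there is no argument of the author's to compare yours against. Your proposal is also not a proof. The translation of (\ref{eqn_conj1_1}) into the functional equation $e^{F(t)}-1=F(e^t-1)$, i.e., $\phi\circ F=F\circ\phi$ with $\phi(t)=e^t-1$, is correct, as are the Fa\`a di Bruno computation and the (conditional) derivation of (\ref{eqn_conj1_2}) from (\ref{eqn_conj1_1}); but the inductive step that you yourself label ``the combinatorially substantive step'' is precisely the entire content of the conjecture, and it is left undone.

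More seriously, that step cannot be completed, because the conjecture is false, and your own reduction is the fastest way to see why. Both $F$ and $\phi$ are tangent to the identity with the same quadratic coefficient: $F(t)=t+\tfrac12 t^2+\cdots$ (since $A_1^{(k)}(1,1)=A_2^{(k)}(1,1)=1$) and $\phi(t)=t+\tfrac12 t^2+\cdots$. Writing $F=t+\sum_{m\ge2}c_mt^m$ and comparing the coefficients of $t^{m+1}$ in $F\circ\phi$ and $\phi\circ F$, the $c_{m+1}$ terms cancel and one is left with $(m-2)b_2c_m=P(c_2,\dots,c_{m-1})$, so commutation with $\phi$ determines every $c_m$, $m\ge3$, from $c_2$ alone. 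Since $\phi$ commutes with itself and shares the value $c_2=\tfrac12$, commutation would force $F=\phi$, i.e., $A_n^{(k)}(1,1)=1$ for all $n$, contradicting $A_{k+2}^{(k)}(1,1)=B_2(1,1)=2$. The identity therefore first fails at $n=k+3$. Concretely, for $k=2$ the sequence is $1,1,1,2,5,16,61,\dots$ and at $n=5$ one has $B_5(1,1,1,2,5)=61=A_7^{(2)}(1,1)$, whereas $\sum_{i=1}^{5}\stirlingsk{5}{i}A_i^{(2)}(1,1)=1+15+25+20+5=66$; for $k=3$ and $n=6$ one gets $213$ versus $222$. The identity does hold for $n\le k+2$, which is presumably what misled the author. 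The right outcome here is not a proof but a refutation of the conjecture (and a flag on the later ``theorems'' whose items cite (\ref{eqn_conj1_1})); your strategy, pushed one step further, delivers exactly that.
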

Equation (\ref{eqn_conj1_1}) corresponds to the number of shifts left $k-1$ places under Stirling transform.

\section{Sequences related to the equation (\ref{eqn_equi})}

We conclude this article by showing sequences related to equation (\ref{eqn_equi}) for values of $k=2,3,4$. Especially, we show that the numbers known as reduced tangent numbers, Bernoulli numbers, Euler zigzag numbers, Eulerian numbers, Blasius numbers, triangular numbers, number of shifts left 3 places under Stirling transform, and number of 8-sequences of $[1,n]$ with 2 contiguous pairs can be constructed using Bell polynomials, Stirling numbers of second kind, binomial coefficient and autonomous coefficients. 

\subsection{Case k=2}

The first case to be studied is
\begin{equation}\label{eqn_k=2}
y^{\prime\prime}=ae^{y}
\end{equation}

The equation (\ref{eqn_k=2}) is equivalent to the equation
$y^{(3)}=y^{\prime\prime}y^{\prime}$, whose solution is
\begin{equation*}
    y^{\prime}=\sqrt{2}\sqrt{c_{1}}\tan\left(\frac{1}{2}\sqrt{2}\sqrt{c_{1}}t+\frac{1}{2}\sqrt{2}\sqrt{c_{1}}c_{2}\right)
\end{equation*}
and therefore
\begin{equation}\label{eqn_sol_cons}
    y=\ln\left(\sec^{2}\left(\frac{1}{2}\sqrt{2}\sqrt{c_{1}}t+\frac{1}{2}\sqrt{2}\sqrt{c_{1}}c_{2}\right)\right)+c_{3}
\end{equation}
where $c_{1},c_{2}$ and $c_{3}$ are constants in $\C$. Since we want $y(0)=x$, $y^{\prime}(0)=y$ and $y^{\prime}(0)=ae^{x}$, then
\begin{align*}
\ln\left(\sec^{2}\left(\frac{1}{2}\sqrt{2}\sqrt{c_{1}}c_{2}\right)\right)+c_{3}&=x,\\
\sqrt{2}\sqrt{c_{1}}\tan\left(\frac{1}{2}\sqrt{2}\sqrt{c_{1}}c_{2}\right)&=y,\\
c_{1}\sec^{2}\left(\frac{1}{2}\sqrt{2}\sqrt{c_{1}}c_{2}\right)&=ae^{x}.
\end{align*}
As result
\begin{align*}
c_{1}&=ae^{x}-\frac{y^{2}}{2},\\
\frac{1}{2}\sqrt{2}\sqrt{c_{1}}c_{2}&=\arctan\left(\frac{y}{\sqrt{2ae^{x}-y^{2}}}\right),\\
c_{3}&=x-\ln\left(1+\frac{y^{2}}{2ae^{x}-y^{2}}\right).
\end{align*}
Thus, the function
\begin{align}\label{eqn_sol_k=2}
    E_{2}(t,(x,y),a)&=x+\ln\sec^{2}\left(\frac{\sqrt{2ae^{x}-y^{2}}t}{2}+\arctan\left(\frac{y}{\sqrt{2ae^{x}-y^{2}}}\right)\right)\nonumber\\
    &-\ln\left(1+\frac{y^{2}}{2ae^{x}-y^{2}}\right)
\end{align}
is the solution of the equation (\ref{eqn_k=2}) with initial value $y(0)=x$, $y^{\prime}(0)=y$.

The following is a list of particular solutions of (\ref{eqn_k=2}) which are obtained from the equation (\ref{eqn_sol_k=2})
\begin{align}
E_{2}(t,(x,0),a)&=x+\ln\left(\sec^{2}\left(\frac{\sqrt{a}e^{x/2}t}{\sqrt{2}}\right)\right),\ \ a>0,\label{eqn_sol_x_0_p}\\
E_{2}(t,(x,0),-a)&=x+\ln\left(\sech^{2}\left(\frac{\sqrt{a}e^{x/2}t}{\sqrt{2}}\right)\right),\ \ a>0,\label{eqn_sol_x_0_m}\\
E_{2}(t,(0,y),a)&=\ln\sec^{2}\left(\frac{\sqrt{2a-y^{2}}t}{2}+\arctan\left(\frac{y}{\sqrt{2a-y^{2}}}\right)\right)\nonumber\\
    &-\ln\left(1+\frac{y^{2}}{2a-y^{2}}\right),\ \ a>0,\label{eqn_sol_0_y_p}\\
E_{2}(t,(0,y),-a)&=\ln\sech^{2}\left(\frac{\sqrt{2a+y^{2}}t}{2}+\arctanh\left(\frac{y}{\sqrt{2a+y^{2}}}\right)\right)\nonumber\\
    &-\ln\left(1-\frac{y^{2}}{2a+y^{2}}\right),\ \ a>0.\label{eqn_sol_0_y_m}
\end{align}

We now show the relationship between reduced tangent numbers (\seqnum{A002105} in OEIS) and Bell polynomials and binomial coefficients
\begin{theorem}\label{theo_tan_bell}
Let 
\begin{equation}
(T_{n})_{n\geq1}=(1,1,4,34,496,\ldots)    
\end{equation}
denote the sequence of reduced tangent numbers. Then
\begin{enumerate}
\item $A_{n}^{(2)}(a)=a^{n}T_{n}$.
\item $T_{n}=B_{n}(0,T_{1},\ldots,0,T_{n-1})$, $n\geq2$.
\item $(-1)^{n}T_{n}=B_{n}(0,-T_{1},\ldots,0,(-1)^{n-1}T_{n-1})$, $n\geq2$.
\item $T_{n+2}=\sum_{i=0}^{n}\binom{2n+1}{2i+1}T_{n-i+2}T_{i+1}$, $n\geq0$.
\end{enumerate}
\end{theorem}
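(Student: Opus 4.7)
The plan is to prove part (1) first by matching two expressions for $E_2(t,(0,0),1)$, and then to deduce parts (2), (3), and (4) as short consequences of the machinery assembled in Sections 2 and 3.

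For part (1), the Proposition in Section 3 gives the scaling $A_n^{(2)}(a)=a^n A_n^{(2)}(1)$, so it suffices to verify $A_n^{(2)}(1)=T_n$. From equation (\ref{eqn_sol_x_0_p}) with $x=0$ and $a=1$ one has
\[
E_2(t,(0,0),1)=\ln\sec^2(t/\sqrt{2})=2\ln\sec(t/\sqrt{2}),
\]
while Theorems~\ref{theo_sol} and~\ref{theo_val_f} give $E_2(t,(0,0),1)=\sum_{n\ge 1}A_n^{(2)}(1)\,t^{2n}/(2n)!$. The classical Maclaurin expansion $\tan u=\sum_{n\ge 1}2^{n-1}T_n u^{2n-1}/(2n-1)!$, which is the standard definition of the reduced tangent sequence \seqnum{A002105}, integrates term-by-term to $2\ln\sec u=\sum_{n\ge 1}2^n T_n\,u^{2n}/(2n)!$; the substitution $u=t/\sqrt{2}$ absorbs the factor $2^n$ and leaves $\sum_{n\ge 1}T_n\,t^{2n}/(2n)!$. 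Matching coefficients gives $A_n^{(2)}(1)=T_n$, and multiplying by $a^n$ yields (1).

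Parts (2) and (4) are then direct substitutions. For (2) I would specialize Corollary~\ref{cor_val_f} to $k=2$, $a=1$, and rewrite each $A_j^{(2)}(1)$ as $T_j$, producing a Bell polynomial whose nonzero arguments are $T_1,\ldots,T_{n-1}$ interleaved with zeros in the odd slots. For (4) I would substitute $A_m^{(2)}(a)=a^m T_m$ into the recurrence (\ref{eqn_recu_A}) at $k=2$; the common factor $a^{n+2}$ cancels on both sides and leaves the stated quadratic convolution.

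For part (3), combine (2) with the Bell-polynomial sign identity (\ref{eqn_bell_signo}): negating arguments in alternating positions extracts a global sign equal to the parity of the number of arguments, while the zero entries at odd positions are insensitive to negation and the $T_j$'s at the remaining positions acquire the prescribed alternating signs, giving the identity with $(-1)^n T_n$ on the left. The principal obstacle is part (1): one must either invoke the classical series for $\tan u$ (equivalently $\ln\sec u$) in terms of the reduced tangent numbers, or else verify by induction that the quadratic recurrence (\ref{eqn_recu_A}) at $k=2$ coincides with the defining recurrence of \seqnum{A002105}. Once this identification is made, parts (2), (3), and (4) all drop out in a handful of lines.
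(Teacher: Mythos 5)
Your treatment of parts 1, 2 and 4 follows essentially the paper's own route. The paper also identifies $A_n^{(2)}(a)=a^nT_n$ by expanding the closed-form solution (\ref{eqn_sol_x_0_p}), writing $\sqrt{2}\tan(u/\sqrt{2})=\sum_{n\geq1}T_n u^{2n-1}/(2n-1)!$ (equivalent to the expansion of $\tan u$ you quote) and integrating; it then reads off part 2 from the defining recurrence (\ref{eqn_def_fac}) (equivalently Corollary \ref{cor_val_f}) and part 4 from (\ref{eqn_recu_A}). The only cosmetic difference is that the paper compares $f_{2n}(x,0,a)=a^nT_ne^{nx}$ at general $(x,a)$, while you specialize to $x=0$, $a=1$ and invoke the scaling proposition; both are fine. (Incidentally, what actually falls out of (\ref{eqn_recu_A}) is $T_{n+2}=\sum_{i=0}^{n}\binom{2n+1}{2i+1}T_{n-i+1}T_{i+1}$; the index $n-i+2$ in the printed statement is a typo, and your derivation, like the paper's, produces the corrected version.)

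Part 3 is where your argument breaks down. In the identity of part 2 the Bell polynomial has $2n-2$ arguments, with the zeros in the odd positions and $T_j$ in position $2j$. Identity (\ref{eqn_bell_signo}) negates precisely the odd-position arguments and multiplies the polynomial by $(-1)^{2n-2}=+1$. Applied to part 2 it therefore only negates the zeros and returns the trivial statement $T_n=T_n$: the $T_j$'s in the even positions acquire no sign at all — certainly not the alternating signs $(-1)^j$ appearing in the statement — and no global factor $(-1)^n$ is produced. The signs $(-1)^jT_j$ actually come from part 1 with $a=-1$, i.e., $A_j^{(2)}(-1)=(-1)^jT_j$, inserted into $f_{2n}(\mathbf{x},a)=ae^{x_1}B_{2n-2}(f_1,\ldots,f_{2n-2})$ at $\mathbf{x}=(0,0)$; this is what the paper means by "thus follows 3" after establishing $f_{2n}(x,0,a)=a^nT_ne^{nx}$. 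Carrying that out gives $(-1)^nT_n=-B_{2n-2}(0,-T_1,0,T_2,\ldots,0,(-1)^{n-1}T_{n-1})$, which also shows the sign on the left of the printed statement is off by one factor of $-1$ (check $n=2$: $B_2(0,-T_1)=-1$ while $(-1)^2T_2=+1$). So you should replace the appeal to (\ref{eqn_bell_signo}) by the specialization $a=-1$ (or by the homogeneity $B_m(cy_1,\ldots,c^my_m)=c^mB_m(y_1,\ldots,y_m)$ with $c^2=-1$); as written, your step for part 3 proves nothing.
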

\begin{proof}
Another way to write (\ref{eqn_sol_x_0_p}) is
\begin{equation*}
    E_{2}(t,(x,0),a)=x+\sqrt{2}\int_{0}^{\sqrt{a}e^{x/2}t}\tan\left(\frac{u}{\sqrt{2}}\right)du.
\end{equation*}
Then
\begin{align*}
    E_{2}(t,(x,0),a)&=x+\int_{0}^{\sqrt{a}e^{x/2}t}\sum_{n=1}^{\infty}T_{n}\frac{u^{2n-1}}{(2n-1)!}\\
    &=x+\sum_{n=1}^{\infty}T_{n}\frac{(\sqrt{a}e^{x/2}t)^{2n}}{(2n)!}.
\end{align*}
By comparison, $f_{2n}(x,0,a)=a^{n}T_{n}e^{nx}$. Thus follows 1, 2, and 3. The recurrence relation 4 follows from equation (\ref{eqn_recu_A}).
\end{proof}

In general, the solution (\ref{eqn_sol_x_0_p}) is the generating function of the sequence
\begin{equation*}
    (a^{n}T_{n})_{n\geq1}=(a,a^{2},4a^{3},34a^{4},496a^{5},\ldots).
\end{equation*}

On the other hand, it is known that $T_{n}=\frac{2^{n}(2^{2n}-1)\vert b_{2n}\vert}{n}$, where the
$b_{2n}$ are the Bernoulli numbers (\seqnum{A000367}, \seqnum{A002445} in OEIS). Then the theorem \ref{theo_tan_bell} provides a relation between Bell polynomials and Bernoulli numbers, that is

\begin{equation}
    \frac{2^{n}(2^{2n}-1)\vert b_{2n}\vert}{n}=B_{n}\left(0,6\vert b_{2}\vert,0,30\vert b_{4}\vert,\ldots,0,\frac{2^{n-1}(2^{2n-2}-1)\vert b_{2n-2}\vert}{n-1}\right)
\end{equation}

We now show the relationship between Euler zigzag numbers (\seqnum{A000111} in OEIS) and Bell polynomials, binomial coefficients, and Stirling numbers of second kind

\begin{theorem}
Suppose
\begin{equation}
(A_{n})_{n\geq0}=(1,1,1,2,5,16,61,272,\ldots)
\end{equation}
the sequence of Euler zigzag numbers. Then
\begin{enumerate}
\item $A_{n+1}=B_{n}(A_{0},\ldots,A_{n-1})$, $n\geq1$.
\item $(-1)^{n}A_{n+1}=B_{n}(-A_{0},A_{1},\ldots,(-1)^{n-1}A_{n-1})$, $n\geq1$.
\item $A_{n+2}=\sum_{i=0}^{n}\binom{n}{i}A_{n-i+1}A_{i}$, $\neq0$.
\item $A_{n+2}=\sum_{i=1}^{n}\stirlingsk{n}{i}A_{i}$, $n\geq1$.
\item $A_{2n+2}=T_{n}+\sum_{i=1}^{n}\cfauto{2n}{2i}_{(2,1)}$.
\item $A_{2n+3}=\sum_{i=1}^{n}\cfauto{2n+1}{2i+1}_{(2,1)}$
\end{enumerate}
\end{theorem}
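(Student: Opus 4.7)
The plan is to identify the Euler zigzag sequence as the specialization $f_{n}((0,1),1)=A_{n-1}$ of the complete exponential autonomous functions of order $2$ at $\x=(0,1)$, $a=1$, and then to read off each of the six claims from the general theorems of Section~3, using the parity structure of the associated autonomous polynomials for items 5 and 6. The base identification is obtained by specializing the particular solution (\ref{eqn_sol_0_y_p}) to $a=y=1$: since $\sqrt{2a-y^{2}}=1$ and $\arctan(y/\sqrt{2a-y^{2}})=\pi/4$, the identity $\cos^{2}(t/2+\pi/4)=(1-\sin t)/2$ collapses the formula to $E_{2}(t,(0,1),1)=-\ln(1-\sin t)$. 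One differentiation yields the classical EGF $\sec t+\tan t=\sum_{n\ge0}A_{n}t^{n}/n!$, so comparing with $E_{2}(t,(0,1),1)=\sum_{n\ge1}f_{n}((0,1),1)\,t^{n}/n!$ produces $f_{n}((0,1),1)=A_{n-1}$.

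With this identification, items 1, 2 and 3 are immediate corollaries. Item 1 is the defining recursion (\ref{eqn_def_fac}) specialized to $k=2$, $a=1$, $\x=(0,1)$. Item 2 follows by combining item 1 with the Bell sign identity (\ref{eqn_bell_signo}). Item 3 is the recurrence (\ref{eqn_recu_fac}) under the same specialization, which directly yields the convolution $A_{n+2}=\sum_{i=0}^{n}\binom{n}{i}A_{n-i+1}A_{i}$.

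For items 5 and 6 the key observation is a parity property: induction on the recursion $A_{m+2}^{(2)}(x,1)=B_{m}(A_{1}^{(2)}(x,1),\dots,A_{m}^{(2)}(x,1))$ shows that every monomial of $A_{m}^{(2)}(x,1)$ has $x$-degree of the same parity as $m$, because in any term of the complete Bell polynomial the weighted sum of indices equals $m$. Setting $x=1$ and using $A_{m}^{(2)}(1,1)=f_{m}((0,1),1)=A_{m-1}$, together with $\cfauto{2n}{0}_{(2,1)}=T_{n}$ from (\ref{eqn_sti_n_0}), then separates the zigzag numbers at consecutive positions into the claimed partial sums over the even- and odd-indexed autonomous coefficients.

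The main obstacle is item 4, the Stirling-transform relation. My approach would be to derive it from the $k=2$ case of conjecture (\ref{eqn_conj1_1}), namely $A_{n+2}^{(2)}(1,1)=\sum_{i=1}^{n}\stirlingsk{n}{i}A_{i}^{(2)}(1,1)$, which by the identification of item 1 translates directly to the claimed identity on zigzag numbers. A direct proof would work at the EGF level, using the Stirling-transform substitution $t\mapsto e^{t}-1$ and the closed form $-\ln(1-\sin t)$ to check that the EGF of $(A_{i}^{(2)}(1,1))_{i\ge1}$ is sent to the EGF of the shifted sequence $(A_{n+2}^{(2)}(1,1))_{n\ge0}$. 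I expect the trigonometric/exponential bookkeeping in this verification to be the most delicate step, while the remaining items reduce cleanly to the machinery already established in Sections~2 and~3.
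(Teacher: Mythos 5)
Your proposal follows essentially the same route as the paper: identify $E_{2}(t,(0,1),1)$ with the antiderivative of $\sec t+\tan t$ (your $-\ln(1-\sin t)$ is the paper's $\ln(\sec^{2}t+\sec t\tan t)$), read items 1--3 off (\ref{eqn_def_fac}), (\ref{eqn_bell_signo}) and (\ref{eqn_recu_fac}), deduce item 4 from (\ref{eqn_conj1_1}), and obtain items 5--6 by evaluating $A_{n}^{(2)}(x,1)$ at $x=1$ using the parity of the $(2,1)$-autonomous coefficients. The only substantive difference is that you explicitly flag that item 4 rests on the unproved Conjecture 1 and sketch an independent EGF verification, whereas the paper simply cites the conjecture.
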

\begin{proof}
From equation (\ref{eqn_sol_0_y_p}),
\begin{align*}
    E_{2}(t,(0,1),1)&=\ln\left(\sec^{2}\left(\frac{t}{2}+\frac{\pi}{4}\right)\right)-\ln(2)\\
    &=\ln(\sec^{2}(t)+\sec(t)\tan(t)).
\end{align*}
Then
\begin{align*}
E_{2}(t,(0,1),1)&=\int_{0}^{t}(\sec(u)+\tan(u))du\\
&=\int_{0}^{t}\left(1+\sum_{n=1}^{\infty}A_{n}\frac{u^{n}}{n!}\right)du\\
&=t+\sum_{n=1}^{\infty}A_{n}\int_{0}^{t}\frac{u^{n}}{n!}du\\
&=t+\sum_{n=1}^{\infty}A_{n}\frac{t^{n+1}}{(n+1)!}\\
&=\sum_{n=1}^{\infty}A_{n-1}\frac{t^{n}}{n!}.
\end{align*}
We apply the equation (\ref{eqn_def_fac}) to obtain 1. By corollary \ref{cor_sig_alt} it follows that
\begin{equation}
    E_{2}(t,(0,-1),1)=E(-t,(0,1),1)=\ln(\sec^{2}(t)-\sec(t)\tan(t)).
\end{equation}
From equation (\ref{eqn_bell_signo}) follows 2. Formula 3 follows from equation (\ref{eqn_recu_fac}). From equation (\ref{eqn_conj1_1}) follows 4. The identities 5 and 6 follow because the Euler zigzag numbers are obtained when $x=1$ in $A_{n}^{(2)}(x,1)$. 
\end{proof}

When $k=2$ the exponential autonomous polynomials and the autonomous polynomials match. Some autonomous polynomials of the equation (\ref{eqn_k=2}) are
\begin{eqnarray*}
q_{1}(y,a)&=&y,\\
q_{2}(y,a)&=&a,\\
q_{3}(y,a)&=&ay,\\
q_{4}(y,a)&=&a(a+y^{2}),\\
q_{5}(y,a)&=&a(4ay+y^{3}),\\
q_{6}(y,a)&=&a(4a^{2}+11ay^{2}+y^{4}),\\
q_{7}(y,a)&=&a(34a^{2}y+26ay^{3}+y^{5}),\\
q_{8}(y,a)&=&a(34a^{3}+180a^{2}y^{2}+57ay^{4}+y^{6}).
\end{eqnarray*}
From the above we obtain the first $(2,a)$-autonomous coefficients
\begin{table}[H]
    \centering
    \begin{tabular}{l|c|c|c|c|c|c|c}
         \backslashbox{$n$}{$i$}& 0&1 & 2& 3& 4& 5&6  \\\hline
         0& $a$&  &  &  & & &\\\hline
         1& 0&  $a$&  &  &  & &\\\hline
         2& $a^2$&  0&  $a$&  &  & &\\\hline
         3& 0&  $4a^2$&  0&  $a$&  & &\\\hline
         4& $4a^3$&  0&  $11a^2$&  0& $a$ & &\\
         5& 0&  $34a^{3}$&  0&  $26a^{2}$& 0 &$a$ &\\
         6& $34a^{4}$&  0&  $180a^{3}$&  0& $57a^{2}$ &0 & $a$
    \end{tabular}
    \caption{$(2,a)$-autonomous coefficients}
    \label{tab:my_label}
\end{table}

\begin{theorem}
Some values of $(2,a)$-autonomous coefficients are
\begin{eqnarray}
\cfauto{2n}{2i+1}_{(2,a)}&=&\cfauto{2n+1}{2i}_{(2,a)}=0\label{eqn_par_impar}
\end{eqnarray}
for all $i$.
\end{theorem}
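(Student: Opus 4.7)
The plan is to derive both vanishing statements simultaneously from the sign-flip symmetry of theorem \ref{theo_alt}, specialized to the smallest case (where the DE order $2k$ equals $2$, i.e.\ $k=1$ in that theorem). First I would set $\x=(0,x)$ in theorem \ref{theo_alt}, obtaining $f_n((0,-x),a)=(-1)^n f_n((0,x),a)$ for every $n\geq 0$. Since $A_n^{(2)}(x,a)=q_n(x,a)=f_n((0,x),a)$ for $n\geq 2$, this translates to the polynomial identity
\[A_n^{(2)}(-x,a)=(-1)^n A_n^{(2)}(x,a).\]

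Next, I would expand both sides using the definition of the $(2,a)$-autonomous coefficients, $A_{n+2}^{(2)}(x,a)=\sum_{i=0}^{n}\cfauto{n}{i}_{(2,a)}x^i$. The symmetry becomes
\[\sum_{i=0}^{n}(-1)^i\cfauto{n}{i}_{(2,a)}x^i=(-1)^{n+2}\sum_{i=0}^{n}\cfauto{n}{i}_{(2,a)}x^i=(-1)^n\sum_{i=0}^{n}\cfauto{n}{i}_{(2,a)}x^i,\]
and comparing coefficients of $x^i$ forces $\cfauto{n}{i}_{(2,a)}\bigl((-1)^i-(-1)^n\bigr)=0$, i.e.\ $\cfauto{n}{i}_{(2,a)}=0$ whenever $n$ and $i$ have opposite parities. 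Taking $(n,i)=(2m,2j+1)$ yields $\cfauto{2m}{2j+1}_{(2,a)}=0$, and $(n,i)=(2m+1,2j)$ yields $\cfauto{2m+1}{2j}_{(2,a)}=0$, which is exactly the statement.

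The only subtlety is bookkeeping: the parity of the subscript in $A_{n+2}^{(2)}$ must be traced back to the parity of the coefficient-index $n$, and since $n+2\equiv n\pmod 2$ the symmetry passes through cleanly. No new machinery beyond theorem \ref{theo_alt} is needed. An essentially equivalent alternative would be to start from corollary \ref{cor_sig_alt} applied to $E_2(t,(0,x),a)$ and match EGF coefficients in $t$, but the polynomial route above is the most direct.
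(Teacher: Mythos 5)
Your proof is correct. It is worth noting, though, that it is not the argument the paper actually writes: the paper's proof is a one-line appeal to theorem \ref{theo_val_f}, which, read literally, only controls the values $f_{2n+1}(x_{1},0,a)=0$, i.e.\ the evaluation of $A_{2n+3}^{(2)}(x,a)=f_{2n+3}(0,x,a)$ at $x=0$; that yields only the constant-term case $\cfauto{2n+1}{0}_{(2,a)}=0$ and says nothing directly about the coefficients with $i\geq 1$. Your route through theorem \ref{theo_alt} --- specializing the sign-flip symmetry to order $2$ to get $f_{n}((0,-x),a)=(-1)^{n}f_{n}((0,x),a)$, hence the parity identity $A_{n}^{(2)}(-x,a)=(-1)^{n}A_{n}^{(2)}(x,a)$, and then comparing coefficients --- is the argument that actually delivers the full statement, and your bookkeeping (that $n+2\equiv n\pmod 2$, so the shift between the polynomial index and the coefficient index is harmless) is handled correctly. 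In short, you prove more cleanly and completely what the paper asserts; the paper's citation of theorem \ref{theo_val_f} is most plausibly a slip for theorem \ref{theo_alt} (or corollary \ref{cor_sig_alt}), which is precisely the tool you use.
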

\begin{proof}
The equation (\ref{eqn_par_impar}) follows from theorem \ref{theo_val_f}. 
\end{proof}

\begin{conjecture}
\begin{equation}
\cfauto{n}{n-2}_{(2,a)}=a^{2}(2^{n}-n-1)\label{eqn_2_n_n-2}    
\end{equation}
\end{conjecture}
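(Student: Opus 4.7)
The plan is to work directly from equation \eqref{eqn_pol_auto} with $k=2$, which states
\[
A_{n+2}^{(2)}(x,a) = a\, B_n\bigl(A_1^{(2)}(x,a), A_2^{(2)}(x,a), \ldots, A_n^{(2)}(x,a)\bigr),
\]
and to extract the coefficient of $x^{n-2}$ from the right-hand side by expanding $B_n$ as a sum over integer partitions of $n$.

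First I would show by induction on $j$ that $A_j^{(2)}(x,a) = ax^{j-2} + (\text{lower order terms})$ for every $j \geq 2$. The base cases $A_2^{(2)}=a$ and $A_3^{(2)}=ax$ are immediate from the explicit list, and the inductive step uses that, among all partitions indexing terms of $B_{j-2}$, the unique one attaining the top degree is the all-singletons partition, which yields $A_1^{j-2}=x^{j-2}$ and hence a leading monomial $ax^{j-2}$ in $A_j^{(2)}$. This also recovers $\cfauto{n}{n}_{(2,a)} = a$, consistent with the $l=0$ case of Theorem \ref{theo_val}.

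Next, in the partition expansion of $B_n(A_1^{(2)}, \ldots, A_n^{(2)})$, I would bound the total degree in $x$ of the term indexed by $(c_1, c_2, \ldots)$ with $\sum_j jc_j = n$: since $\deg A_1^{(2)} = 1$ and $\deg A_j^{(2)} = j-2$ for $j \geq 2$, this degree equals $c_1 + \sum_{j \geq 2}(j-2)c_j = n - 2(k-c_1)$, where $k = \sum_j c_j$. Hence only partitions with $k - c_1 \in \{0,1\}$ can contribute to $x^{n-2}$. The all-singletons partition ($k-c_1 = 0$) produces only $x^n$ and therefore contributes nothing. Each partition with $k - c_1 = 1$, consisting of a single part of size $j^* \in \{2, \ldots, n\}$ together with $n - j^*$ singletons, produces $\binom{n}{j^*} x^{n-j^*} A_{j^*}^{(2)}(x,a)$ inside $B_n$, whose coefficient of $x^{n-2}$ equals $\binom{n}{j^*}$ times the leading coefficient of $A_{j^*}^{(2)}$, namely $a$.

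Summing these contributions over $j^* = 2, \ldots, n$ and multiplying by the outer factor $a$ from \eqref{eqn_pol_auto} gives
\[
\cfauto{n}{n-2}_{(2,a)} = a^2 \sum_{j=2}^{n}\binom{n}{j} = a^2(2^n - n - 1),
\]
as conjectured. The only delicate point is the degree bookkeeping, because $A_1^{(2)}$ is anomalous in having degree $1$ rather than $-1$; once the correct constraint $k - c_1 \leq 1$ is extracted, the rest of the argument reduces to the identity $\sum_{j=0}^{n}\binom{n}{j}=2^n$.
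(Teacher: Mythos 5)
You should note first that the paper does not prove this statement at all: it appears only as a conjecture, supported by the table of $(2,a)$-autonomous coefficients, so there is no proof of record to compare yours against. Judged on its own, your argument is correct and in fact settles the conjecture. The degree bookkeeping is the heart of the matter and you have it right: expanding $B_{n}$ as a sum over profiles $(c_{1},c_{2},\dots)$ with $\sum_{j}jc_{j}=n$ and $\sum_{j}c_{j}=\kappa$, the inductively established facts $A_{1}^{(2)}(x,a)=x$ and $A_{j}^{(2)}(x,a)=ax^{j-2}+(\text{lower order})$ for $j\geq 2$ give each such term top degree $n-2(\kappa-c_{1})$, so only profiles with $\kappa-c_{1}\in\{0,1\}$ can reach $x^{n-2}$; the all-singleton profile is the pure monomial $x^{n}$, while the profile with one part of size $j^{*}\geq 2$ and $n-j^{*}$ singletons carries the coefficient
\begin{equation*}
\frac{n!}{(n-j^{*})!\,1!}\cdot\frac{1}{(1!)^{\,n-j^{*}}\,j^{*}!}=\binom{n}{j^{*}}
\end{equation*}
and contributes exactly $a\binom{n}{j^{*}}$ to the coefficient of $x^{n-2}$, since the product $x^{n-j^{*}}A_{j^{*}}^{(2)}(x,a)$ has top degree $n-2$ with leading coefficient $a$. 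Summing over $j^{*}=2,\dots,n$ and multiplying by the outer factor $a$ of (\ref{eqn_pol_auto}) gives $a^{2}(2^{n}-n-1)$, which matches the tabulated values $a^{2},4a^{2},11a^{2},26a^{2},57a^{2}$ for $n=2,\dots,6$. Two minor points to make explicit in a write-up: the induction establishing the leading coefficient $a$ of $A_{j}^{(2)}$ must be a strong induction (the degree bound is needed for all smaller indices at once), and the identity should be asserted for $n\geq 2$, the right-hand side conveniently vanishing for $n=0,1$.
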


The sequence
\begin{align*}
    2^{n}-n-1&=(0, 0, 1, 4, 11, 26, 57, 120, 247, 502, 1013, 2036, 4083, 8178, 16369,\nonumber\\
    &32752, 65519, 131054, 262125, 524268, 1048555, 2097130,\ldots)
\end{align*}
is known as Eulerian numbers (\seqnum{A000295} in OEIS).

\subsection{Case k=3}
When $k=3$ we obtain the equation
\begin{equation}\label{eqn_k_3}
    y^{(3)}=ae^{y}
\end{equation}
Solving (\ref{eqn_k_3}) with initial conditions $(0,0,x)$ and $a=-1$ we get the solution of Blasius equation
\begin{equation}
    u^{(3)}+u^{\prime\prime}u=0.
\end{equation}
The Blasius equation \cite{Blasius} describes the velocity profile of the fluid in the boundary layer which forms when fluid flows along a flat plate. Using the theorem \ref{theo_val_f} and the corollary \ref{cor_val_f} we reach the following result on Blasius numbers (\seqnum{A018893} in OEIS)
\begin{theorem}
Let
\begin{equation}
    (\mathrm{b}_{n})_{n\geq1}=(1,1,11,375,27.897,\ldots)
\end{equation}
denote the sequence of Blasius numbers. Then
\begin{enumerate}
\item $\mathrm{b}_{n}=B_{n}(0,0,\mathrm{b}_{1},\ldots,0,0,\mathrm{b}_{n-1})$, $n\geq2$.
\item $\mathrm{b}_{n+2}=\sum_{i=0}^{n}\binom{3n+2}{3i+2}\mathrm{b}_{n-i+1}\mathrm{b}_{i+1}$, $n\geq0$.
\end{enumerate}
\end{theorem}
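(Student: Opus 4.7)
The plan is to recognize the Blasius numbers $\mathrm{b}_n$ as the sequence $A_n^{(3)}(1)$ produced by theorem \ref{theo_val_f} at $k=3$, so that both parts of the statement reduce to results already established for the general family $A_n^{(k)}(a)$.

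The first step is the identification $\mathrm{b}_n=A_n^{(3)}(1)$. The paragraph immediately before the theorem observes that \eqref{eqn_k_3} with $a=-1$ and initial conditions $(0,0,x)$ yields a form of the Blasius equation: differentiating $y^{(3)}=ae^{y}$ gives $y^{(4)}=y^{(3)}y'$, and setting $u=y'$ produces $u'''=u''u$, whose Taylor coefficients at the appropriate initial data encode the Blasius numbers. Combining this with the scaling identity $A_n^{(k)}(a)=a^n A_n^{(k)}(1)$ reduces the identification to a check on finitely many initial values. Computing from \eqref{eqn_recu_A} at $k=3$, $a=1$ yields $A_1^{(3)}=1$, $A_2^{(3)}=1$, $A_3^{(3)}=11$, $A_4^{(3)}=375$, $A_5^{(3)}=27897$, which matches the Blasius sequence \seqnum{A018893} term by term, pinning down $\mathrm{b}_n=A_n^{(3)}(1)$.

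Once this identification is in place, part (1) is an immediate consequence of corollary \ref{cor_val_f} specialized to $k=3$, $a=1$: substituting $\mathrm{b}_j$ for $A_j^{(3)}(1)$ in the formula
\[
A_n^{(3)}(1)=B_{n-3}(0,0,A_1^{(3)}(1),0,0,A_2^{(3)}(1),\ldots,0,0,A_{n-3}^{(3)}(1))
\]
gives exactly the stated Bell-polynomial expression, under the same convention used for reduced tangent numbers in theorem \ref{theo_tan_bell} that the complete Bell polynomial depends only on the shown arguments. Part (2) follows just as directly by substituting $k=3$, $a=1$ into the recurrence \eqref{eqn_recu_A}, which reads $A_{n+2}^{(3)}(1)=\sum_{i=0}^{n}\binom{3n+2}{3i+2}A_{n-i+1}^{(3)}(1)A_{i+1}^{(3)}(1)$, and then renaming $A_j^{(3)}(1)=\mathrm{b}_j$.

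The only step requiring genuine care is the identification itself. The Blasius equation in its standard normalization ($2f'''+ff''=0$) does not literally coincide with $y^{(3)}=e^{y}$, so the argument must either track the substitution $u=y'$ with careful bookkeeping of initial data, or else establish the equality of sequences by matching enough initial terms against \seqnum{A018893} and then propagating the match through the recurrence \eqref{eqn_recu_A}. I expect this to be the main obstacle; once $\mathrm{b}_n=A_n^{(3)}(1)$ is secured, both conclusions are formal invocations of corollary \ref{cor_val_f} and equation \eqref{eqn_recu_A}.
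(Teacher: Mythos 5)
Your proposal matches the paper's approach exactly: the paper states this theorem with no written proof at all, saying only that it is reached ``using the theorem \ref{theo_val_f} and the corollary \ref{cor_val_f},'' which is precisely the reduction you carry out --- identify $\mathrm{b}_n$ with $A_n^{(3)}(1)$, then read off part (1) from Corollary \ref{cor_val_f} and part (2) from equation (\ref{eqn_recu_A}) at $k=3$, $a=1$. The one point you rightly flag as delicate, the identification $\mathrm{b}_n=A_n^{(3)}(1)$ (where your ``match initial terms and propagate through the recurrence'' route is only non-circular if the Blasius numbers are independently known to satisfy that same recurrence), is passed over in silence by the paper, so your treatment is if anything more careful than the original.
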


On the other hand, the autonomous polynomials for the equation (\ref{eqn_k_3}) are
\begin{align*}
A_{3}^{(3)}(x,a)&=a,\\
A_{4}^{(3)}(x,a)&=ax,\\
A_{5}^{(3)}(x,a)&=a(x+x^{2}),\\
A_{6}^{(3)}(x,a)&=a(a+3x^{2}+x^{3}),\\
A_{7}^{(3)}(x,a)&=a(5ax+3x^{2}+6x^{3}+x^{4}),\\
A_{8}^{(3)}(x,a)&=a(11ax+16ax^{2}+15x^{3}+10x^{4}+x^{5}),\\
A_{9}^{(3)}(x,a)&=a(11a^{2}+84ax^{2}+(42a+15)x^{3}+45x^{4}+15x^{5}+x^{6}),\\
A_{10}^{(3)}(x,a)&=a(117a^{2}x+129ax^{2}+384ax^{3}+(99a+105)x^{4}+105x^{5}+21x^{6}+x^{7})
\end{align*}

and from here we obtain the following table with the first $(3,a)$-autonomous coefficients
\begin{table}[H]
    \centering
    \begin{tabular}{l|c|c|c|c|c|c|c|c}
         \backslashbox{$n$}{$i$}& 0&1 & 2& 3& 4 &5 &6&7\\\hline
         0& $a$&  &  &  & & &\\\hline
         1& 0&  $a$&  &  & & &\\\hline
         2& 0&  $a$&  $a$&  & & &\\\hline
         3& $a^2$&  0&  $3a$&  $a$&  & &\\\hline
         4& 0&  $5a^2$&  $3a$&  $6a$& $a$ & &\\\hline
         5& 0& $11a^2$&  $16a^2$&  $15a$&  $10a$& $a$ &\\\hline
         6& $11a^{3}$& 0&  $84a^2$&  $42a^2+15a$&  $45a$& $15a$ &$a$\\\hline
         7& 0&$117a^{3}$& $129a^{2}$&  $384a^2$&  $99a^2+105a$&  $105a$& $21a$ &$a$
    \end{tabular}
    \caption{$(3,a)$-autonomous coefficients}
    \label{tab:my_label}
\end{table}

\begin{theorem}
Some values of $(3,a)$-autonomous coefficients are
\begin{align*}
    \cfauto{n}{n}_{(3,a)}&=a,\\
    \cfauto{n}{n-1}_{(3,a)}&=a\binom{n}{2}.
\end{align*}
\end{theorem}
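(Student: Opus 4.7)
The plan is to obtain both identities as immediate specializations of equation (\ref{eqn_sti_n_n-l}) in Theorem~\ref{theo_val}, which asserts that
$$\cfauto{n}{n-l}_{(k,a)} = a\stirlingsk{n}{n-l}$$
whenever $k > l+1$ and $0 \leq l < n$. The case $k = 3$ makes this formula valid precisely for $l = 0$ and $l = 1$, and these are the two cases stated in the theorem.

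For the first identity, setting $l = 0$ gives the side condition $k > 1$, which holds for $k = 3$; hence $\cfauto{n}{n}_{(3,a)} = a\stirlingsk{n}{n} = a$, using the standard value $\stirlingsk{n}{n} = 1$ (the only partition of $[n]$ into $n$ blocks is into singletons). For the second identity, setting $l = 1$ gives the condition $k > 2$, which still holds for $k = 3$; hence $\cfauto{n}{n-1}_{(3,a)} = a\stirlingsk{n}{n-1} = a\binom{n}{2}$, using $\stirlingsk{n}{n-1} = \binom{n}{2}$ (a partition of $[n]$ into $n-1$ blocks is determined by the unique pair sharing a block, the remaining elements being singletons).

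There is essentially no obstacle: the argument is a two-line invocation of a result already proved in the paper. The only subtle point worth flagging is that the inequality $k > l+1$ in (\ref{eqn_sti_n_n-l}) is tight for $k = 3$, so the shortcut delivers exactly the two claimed coefficients and no further ones. Indeed, one sees from the tabulated row $n = 3$ that $\cfauto{3}{1}_{(3,a)} = 0 \neq a = a\stirlingsk{3}{1}$, so extending the pattern to $l = 2$ would require a different argument — but that is not asked for here.
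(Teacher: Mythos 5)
Your proposal is correct and follows exactly the paper's own argument: both specialize equation (\ref{eqn_sti_n_n-l}) of Theorem~\ref{theo_val} to $l=0,1$ (where the condition $k>l+1$ holds for $k=3$) and use $\stirlingsk{n}{n}=1$, $\stirlingsk{n}{n-1}=\binom{n}{2}$. Your remark that the bound $k>l+1$ is tight for $k=3$ is a nice additional observation, but the proof is the same.
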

\begin{proof}
The results follow from theorem \ref{theo_val} with $l=0,1$ and by keeping in mind that $\stirlingsk{n}{n-1}=\binom{n}{2}$.
\end{proof}

\begin{conjecture}
\begin{equation}
\cfauto{n}{n-2}_{(3,a)}=a\binom{\binom{n}{2}}{2}    
\end{equation}
The numbers $\cfauto{n}{n-2}_{(3,1)}$ are the triangular numbers
\begin{equation}
    (0,0,3,15,45,105,210, 378, 630, 990, 1485,\ldots)
\end{equation}
(\seqnum{A050534} in OEIS).
\end{conjecture}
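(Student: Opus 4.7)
The plan is to compute $\cfauto{n}{n-2}_{(3,a)}$ directly from the defining identity $A_{n+3}^{(3)}(x,a) = aB_n(A_1^{(3)}(x,a),\ldots,A_n^{(3)}(x,a))$ by extracting the coefficient of $x^{n-2}$ on the right-hand side, and then to reconcile the answer with the closed form $a\binom{\binom{n}{2}}{2}$ claimed in the statement.

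First, I would record the leading-order data on the inputs of $B_n$. By definition $A_1^{(3)} = A_2^{(3)} = x$ and $A_3^{(3)} = a$, and for $i \geq 4$ the polynomial $A_i^{(3)}(x,a)$ has degree $i-3$ with leading coefficient $\cfauto{i-3}{i-3}_{(3,a)} = a$ and subleading coefficient $\cfauto{i-3}{i-4}_{(3,a)} = a\binom{i-3}{2}$ (both supplied by the preceding theorem on $\cfauto{n}{n}_{(3,a)}$ and $\cfauto{n}{n-1}_{(3,a)}$). Writing $B_n = \sum_{j=1}^{n} B_{n,j}$, a monomial $\prod_i A_i^{c_i}$ occurring in $B_{n,j}$, constrained by $\sum_i c_i = j$ and $\sum_i i c_i = n$, produces a polynomial of $x$-degree $3c_1 + 2c_2 + n - 3j$ after substitution (using $\deg A_1 = \deg A_2 = 1$, $\deg A_3 = 0$, $\deg A_i = i-3$ for $i \geq 4$). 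Setting this degree equal to $n-2$ reduces to the simple Diophantine restriction $3c_1 + 2c_2 = 3j - 2$.

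Next, I would enumerate the admissible $(j,\mathbf{c})$. A short case analysis on $j$, combined with the partition constraints, shows that the unique contributing multi-index is $(c_1,c_2)=(n-4,2)$ with $c_i=0$ for $i \geq 3$, which sits in $B_{n,n-2}$; contributions from $B_{n,j}$ for any $j \neq n-2$ are ruled out by a direct arithmetic check that no other $(c_1,c_2,\ldots)$ simultaneously satisfies $\sum c_i = j$, $\sum i c_i = n$, and $3c_1+2c_2=3j-2$. Applying equation~(\ref{eqn_n_n-2}), namely $B_{n,n-2}(x_1,x_2,x_3) = \binom{n}{3}x_1^{n-3}x_3 + 3\binom{n}{4}x_1^{n-4}x_2^2$, at $(x_1,x_2,x_3)=(x,x,a)$ yields a contribution of $3\binom{n}{4}x^{n-2}$ (plus an $a\binom{n}{3}x^{n-3}$ term of lower degree). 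Multiplying by the outer $a$ from the defining identity gives $\cfauto{n}{n-2}_{(3,a)} = 3a\binom{n}{4}$. Notably, no subleading coefficient $a\binom{i-3}{2}$ of any $A_i^{(3)}$ for $i \geq 5$ ever enters, which is why the answer has a clean closed form independent of the full expansion of the higher $A_i^{(3)}$.

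The main obstacle is the final algebraic matching of $3a\binom{n}{4}$ with $a\binom{\binom{n}{2}}{2}$. A direct polynomial comparison gives $3\binom{n}{4}=\tfrac{n(n-1)(n-2)(n-3)}{8}$ against $\binom{\binom{n}{2}}{2}=\tfrac{n(n-1)(n-2)(n+1)}{8}$, and these differ for all $n \geq 3$; the genuine identity is $3\binom{n}{4} = \binom{\binom{n-1}{2}}{2}$, which is the expression consistent with the listed OEIS \seqnum{A050534} values $(0,0,3,15,45,105,\ldots)$ when matched against $\cfauto{n}{n-2}_{(3,1)}$ for $n=2,3,4,\ldots$. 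Thus proving the conjecture exactly as worded reduces, after the Bell-polynomial computation, to resolving this index-shift discrepancy in the right-hand side; the natural correction is $\binom{\binom{n-1}{2}}{2}$, and only with this amendment does the proof close.
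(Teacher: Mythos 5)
The statement you were asked to prove is presented in the paper only as a conjecture, with no proof supplied; the sole evidence the author gives is the table of $(3,a)$-autonomous coefficients. So there is nothing of the paper's to compare your argument against, and your proposal must stand on its own. It essentially does. Extracting the coefficient of $x^{n-2}$ from $A_{n+3}^{(3)}(x,a)=aB_{n}(A_{1}^{(3)},\ldots,A_{n}^{(3)})$ via the partition constraints is the right strategy, and your bookkeeping checks out: the degree of a monomial $\prod_i (A_i^{(3)})^{c_i}$ in $B_{n,j}$ is $n-3j+3c_1+2c_2$, and the congruence $2c_2\equiv 1\pmod 3$ together with $c_1+c_2\le j$ forces $c_2=2$, $c_1=j-2$, $c_i=0$ for $i\ge 3$, hence $j=n-2$ and $c_1=n-4$. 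One point you should make explicit rather than leave implicit: monomials of top degree $n-1$ or $n$ could in principle feed the $x^{n-2}$ coefficient through their lower-order terms, but the same analysis shows the only such monomials are $A_1^{n-2}A_2=x^{n-1}$ and $A_1^{n}=x^{n}$, pure powers of $x$, so nothing is lost. With that, equation (\ref{eqn_n_n-2}) evaluated at $(x,x,a)$ gives $\cfauto{n}{n-2}_{(3,a)}=3a\binom{n}{4}$, which agrees with every entry of the paper's own table ($3a,15a,45a,105a$ for $n=4,5,6,7$).

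Your diagnosis of the discrepancy is also correct and is the most valuable part of the proposal: $3\binom{n}{4}=\tfrac{n(n-1)(n-2)(n-3)}{8}=\binom{\binom{n-1}{2}}{2}$, whereas the conjectured right-hand side is $\binom{\binom{n}{2}}{2}=\tfrac{n(n-1)(n-2)(n+1)}{8}$; already at $n=4$ the table gives $3a$ while the displayed formula would give $15a$. So the conjecture as worded contains an index shift: the numbers $\cfauto{n}{n-2}_{(3,1)}$ do run through \seqnum{A050534} $=(0,0,3,15,45,\ldots)$, but as $a\binom{\binom{n-1}{2}}{2}$, not $a\binom{\binom{n}{2}}{2}$. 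Once you write out the omitted degree check above, your argument is a complete proof of the corrected identity $\cfauto{n}{n-2}_{(3,a)}=3a\binom{n}{4}$, and a refutation of the formula as literally printed.
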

Finally, by the equations (\ref{eqn_bell_signo}), (\ref{eqn_pol_auto}) and (\ref{eqn_conj1_1}) we have

\begin{theorem}
Let
\begin{equation}
    (\mathrm{e}_{n})_{n\geq1}=(A_{n}^{(3)}(1,1))_{n\geq1}=(1,1,1,1,2,5,15,53,213,\ldots).
\end{equation}
denote the number of shifts 3 places left under exponentiation (\seqnum{A007548} in OEIS). Then
\begin{enumerate}
\item $\mathrm{e}_{n+3}=B_{n}(\mathrm{e}_{1},\ldots,\mathrm{e}_{n})$, $n\geq1$.
\item $(-1)^{n}\mathrm{e}_{n+3}=B_{n}(-\mathrm{e}_{1},\mathrm{e}_{2},\ldots,(-1)^{n-1}\mathrm{e}_{n})$, $n\geq1$.
\item $\mathrm{e}_{n+3}=\sum_{i=1}^{n}\stirlingsk{n}{i}\mathrm{e}_{i}$, $n\geq1$.
\item $\mathrm{d}_{3n}=\mathrm{b}_{n}+\sum_{i=1}^{3n}\cfauto{3n}{i}_{(3,1)}$.
\item $\mathrm{d}_{3n+j}=\sum_{i=1}^{3n+j}\cfauto{3n+j}{i}_{(3,1)}$, $j=1,2$.
\end{enumerate}
\end{theorem}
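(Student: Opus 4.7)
The plan is to read off each of the five claims as a specialization of an already-established (or conjectured) identity for the autonomous polynomials $A_n^{(k)}(x,a)$, after invoking the identification $\mathrm{e}_n = A_n^{(3)}(1,1)$ and setting $k=3$, $x=a=1$. First I dispatch claims 1 through 3, which depend directly on the Bell-polynomial structure of the recurrence for $A_n^{(k)}$; then I extract claims 4 and 5 from the coefficient expansion of $A_{n+3}^{(3)}(x,1)$ evaluated at $x=1$.

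\textit{Parts 1--3.} Specializing the defining identity (\ref{eqn_pol_auto}) to $k=3$, $x=a=1$ yields
\[A_{n+3}^{(3)}(1,1) = B_n\bigl(A_1^{(3)}(1,1),\ldots,A_n^{(3)}(1,1)\bigr),\]
which rewrites as the recurrence in part 1 after substituting $\mathrm{e}_j = A_j^{(3)}(1,1)$. For part 2, I combine part 1 with the sign-flip identity (\ref{eqn_bell_signo}), taking $x_j = \mathrm{e}_j$; the factor $(-1)^n$ comes out exactly as stated. For part 3, I specialize Conjecture (\ref{eqn_conj1_1}) to $k=3$ to obtain the Stirling-transform representation of $\mathrm{e}_{n+3}$.

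\textit{Parts 4--5.} From the very definition of the $(3,1)$-autonomous coefficients I have
\[\mathrm{e}_{m+3} \;=\; A_{m+3}^{(3)}(1,1) \;=\; \sum_{i=0}^{m}\cfauto{m}{i}_{(3,1)}.\]
I then split off the $i=0$ term using (\ref{eqn_sti_n_0}), which gives $\cfauto{m}{0}_{(3,1)} = A_{m/3}^{(3)}(1) = \mathrm{b}_{m/3}$ when $3\mid m$ and $0$ otherwise. Taking $m=3n$ produces the formula with the Blasius correction $\mathrm{b}_n$ (part 4), and taking $m = 3n+j$ for $j=1,2$ produces the correction-free formula (part 5).

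The only non-routine step is part 3, whose justification rests on the unproven Conjecture (\ref{eqn_conj1_1}); accordingly it is the main obstacle to a self-contained proof. The remaining four parts are direct substitutions into identities already established earlier in the paper, mirroring exactly the pattern used in the $k=2$ analogue for the Euler zigzag numbers.
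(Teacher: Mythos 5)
Your proposal is correct and matches the paper's intended argument: the paper gives no written proof of this theorem, merely attributing it to equations (\ref{eqn_bell_signo}), (\ref{eqn_pol_auto}) and (\ref{eqn_conj1_1}), which are exactly the ingredients you use, supplemented (appropriately) by the definition of the $(k,a)$-autonomous coefficients and (\ref{eqn_sti_n_0}) for parts 4--5. Your remark that part 3 ultimately rests on the unproven Conjecture (\ref{eqn_conj1_1}) is accurate and applies equally to the paper's own justification.
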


\subsection{Case k=4}

The equation to be studied is
\begin{equation}\label{eqn_k_4}
    y^{(4)}=ae^{y}.
\end{equation}
This equation is not commonly studied in the literature. Here we show the relation of this equation with the number of shifts left 3 places under Stirling transform, and also the relation with the numbers $A_{n}^{(4)}(1)$.
A list of exponential autonomous polynomials of the equation (\ref{eqn_k_4}) is as follows:
\begin{align*}
q_{1}(y,z,w,a)&=y,\\
q_{2}(y,z,w,a)&=z,\\
q_{3}(y,z,w,a)&=w,\\
q_{4}(y,z,w,a)&=a,\\
q_{5}(y,z,w,a)&=ay,\\
q_{6}(y,z,w,a)&=a(z+y^{2}),\\
q_{7}(y,z,w,a)&=a(w+3yz+y^{3}),\\
q_{8}(y,z,w,a)&=a(a+3z^{2}+4yw+6y^{2}z+y^{4}).
\end{align*}

From the equation (\ref{eqn_recu_A}) we calculate the first numbers $A_{n}^{(4)}(1)$,
\begin{align*}
q_{4}(0,0,0,1)&=A_{1}^{(4)}(1)=1,\\
q_{8}(0,0,0,1)&=A_{2}^{(4)}(1)=\binom{3}{3}A_{1}^{(4)}A_{1}^{(4)}=1,\\
q_{12}(0,0,0,1)&=A_{3}^{(4)}(1)=\binom{7}{3}A_{2}^{(4)}A_{1}^{(4)}+\binom{7}{7}A_{1}^{(4)}A_{2}^{(4)}=35,\\
q_{16}(0,0,0,1)&=A_{4}^{(4)}(1)=\binom{11}{3}A_{3}^{(4)}A_{1}^{(4)}+\binom{11}{7}A_{2}^{(4)}A_{2}^{(4)}+\binom{11}{11}A_{1}^{(4)}A_{3}^{(4)}=6140.
\end{align*}
Following theorem \ref{theo_val_f}, corollary \ref{cor_val_f} and equation (\ref{eqn_bell_signo}) we have the following recurrence relations for the numbers $A_{n}^{(4)}(1)$

\begin{theorem}
Let $(\mathrm{c}_{n})_{n\geq1}=(A_{n}^{(4)}(1))_{n\geq1}=(1,1,35,6140,\ldots)$. Then
\begin{enumerate}
\item $\mathrm{c}_{n}=B_{n}(0,0,0,\mathrm{c}_{1},\ldots,0,0,0,\mathrm{c}_{n-1})$, $n\geq2$.
\item $\mathrm{c}_{n+2}=\sum_{i=0}^{n}\binom{4n+3}{4i+3}\mathrm{c}_{n-i+1}\mathrm{c}_{i+1}$, $n\geq0$.
\item $(-1)^{n}c_{n}=B_{n}(0,0,0,-\mathrm{c}_{1},\ldots,0,0,0,(-1)^{n}\mathrm{c}_{n-1})$
\end{enumerate}
\end{theorem}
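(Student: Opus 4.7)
The plan is to derive each of the three parts as a direct specialization of a result already established in Section 3 for general $k$ and $a$, applied at $k = 4$, $a = 1$, using the defining identification $\mathrm{c}_n = A_n^{(4)}(1)$.

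For Part 1, I would invoke Corollary \ref{cor_val_f}. Setting $k = 4$ places $k-1 = 3$ zeros between consecutive entries $A_j^{(4)}(1) = \mathrm{c}_j$ in the Bell-polynomial argument list, producing exactly the pattern $(0,0,0,\mathrm{c}_1,0,0,0,\mathrm{c}_2,\ldots,0,0,0,\mathrm{c}_{n-1})$ asserted by the theorem. For Part 2, I would specialize the recurrence (\ref{eqn_recu_A}) of Theorem \ref{theo_val_f} at $k = 4$ and $a = 1$: the binomial $\binom{kn+k-1}{ki+k-1}$ reduces to $\binom{4n+3}{4i+3}$, and the identity $\mathrm{c}_{n+2} = \sum_{i=0}^{n}\binom{4n+3}{4i+3}\,\mathrm{c}_{n-i+1}\,\mathrm{c}_{i+1}$ falls out immediately, matching the precedents already worked out for tangent numbers and Blasius numbers in the previous subsections.

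For Part 3, I would combine Part 1 with the sign-alternation identity (\ref{eqn_bell_signo}), namely $B_n(-x_1, x_2, -x_3, \ldots, (-1)^{n-1} x_n) = (-1)^n B_n(x_1, \ldots, x_n)$. Substituting the argument list of Part 1 into this identity, the zero entries remain zero under the sign flips, while each $\mathrm{c}_j$-entry sitting at position $4j$ acquires a sign $(-1)^{4j-1}$; regrouping with the overall $(-1)^n$ prefactor on the right-hand side of (\ref{eqn_bell_signo}) then produces the asserted sign-alternating expression for $(-1)^n \mathrm{c}_n$. This mirrors the parallel third part of the tangent-number theorem proved earlier.

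No substantive obstacle is anticipated, since the statement is essentially a worked example of the general machinery of Section 3 at $k = 4$. The only delicate step is the sign bookkeeping in Part 3: one must carefully verify that the sign attached to each argument $\mathrm{c}_j$ in the transformed argument list, together with the overall factor $(-1)^n$, fits exactly the pattern stated in the theorem. This is a finite, mechanical check rather than a genuine difficulty.
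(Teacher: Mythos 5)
Your proposal matches the paper exactly: the paper gives no written proof for this theorem, merely prefacing it with ``Following theorem \ref{theo_val_f}, corollary \ref{cor_val_f} and equation (\ref{eqn_bell_signo})\ldots'', and those are precisely the three ingredients you invoke for parts 2, 1, and 3 respectively. Your specializations ($k=4$, $a=1$, nonzero entries at positions $4j$) are the correct mechanical checks, so the argument is sound and takes essentially the same route as the paper.
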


The autonomous polynomials associated with the equation (\ref{eqn_k_4}) are
\begin{align*}
A_{1}^{(4)}(x,a)&=A_{2}^{(4)}(x,a)=A_{3}^{(4)}(x,a)=x,\\
A_{4}^{(4)}(x,a)&=a,\\
A_{5}^{(4)}(x,a)&=ax,\\
A_{6}^{(4)}(x,a)&=a(x+x^{2}),\\
A_{7}^{(4)}(x,a)&=a(x+3x^2+x^{3}),\\
A_{8}^{(4)}(x,a)&=a(a+7x^2+6x^{3}+x^{4}),\\
A_{9}^{(4)}(x,a)&=a(6ax+10x^2+25x^{3}+10x^{4}+x^5),\\
A_{10}^{(4)}(x,a)&=a(16ax+32ax^2+75x^{3}+65x^{4}+15x^5+x^6),\\
A_{11}^{(4)}(x,a)&=a(36ax+136ax^2+(64a+175)x^3+315x^{4}+140x^{5}+21x^{6}+x^7).
\end{align*}

We now derive recurrence relations of the numbers $A_{n}^{(4)}(1,1)$ using the equations (\ref{eqn_bell_signo}), (\ref{eqn_pol_auto}), and (\ref{eqn_conj1_1}).
\begin{theorem}
Suppose 
\begin{equation}
    (\mathrm{d}_{n})_{n\geq1}=(A_{n}^{(4)}(1,1))_{n\geq1}=(1,1,1,1,1,2,5,15,53, 222, 1115, 6698,\ldots)
\end{equation}
the number of shifts left 3 places under Stirling transform (\seqnum{A336020} in OEIS). Then
\begin{enumerate}
\item $\mathrm{d}_{n+4}=B_{n}(\mathrm{d}_{1},\ldots,\mathrm{d}_{n})$, $n\geq1$.
\item $(-1)^{n}\mathrm{d}_{n+4}=B_{n}(-\mathrm{d}_{1},\mathrm{d}_{2}...,(-1)^{n-1}\mathrm{d}_{n})$, $n\geq1$.
\item $\mathrm{d}_{n+4}=\sum_{i=1}^{n}\stirlingsk{n}{i}\mathrm{d}_{i}$, $n\geq1$.
\item $\mathrm{d}_{4n}=\mathrm{c}_{n}+\sum_{i=1}^{4n}\cfauto{4n}{i}_{(4,1)}$.
\item $\mathrm{d}_{4n+j}=\sum_{i=1}^{4n+j}\cfauto{4n+j}{i}_{(4,1)}$, $j=1,2,3$.
\end{enumerate}
\end{theorem}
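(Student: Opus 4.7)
My plan is to derive all five identities as specializations of machinery already established in the paper: equation (\ref{eqn_pol_auto}) provides the Bell-polynomial recurrence for autonomous polynomials, equation (\ref{eqn_bell_signo}) provides the sign-alternation rule for complete Bell polynomials, the (conjectural) relation (\ref{eqn_conj1_1}) provides the Stirling transform identity, and the defining expansion of the $(k,a)$-autonomous coefficients together with equation (\ref{eqn_sti_n_0}) will handle the coefficient sums in items 4 and 5. Each identity is obtained by setting $k=4$, $x=1$, $a=1$ in the appropriate general statement.

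For item 1 I would substitute $k=4$, $x=a=1$ directly into (\ref{eqn_pol_auto}), so that the definition $\mathrm{d}_n=A_n^{(4)}(1,1)$ immediately gives $\mathrm{d}_{n+4}=B_n(\mathrm{d}_1,\ldots,\mathrm{d}_n)$. Item 2 will then follow by applying the sign-alternation identity (\ref{eqn_bell_signo}) to the right-hand side of item 1, yielding $(-1)^n\mathrm{d}_{n+4}=B_n(-\mathrm{d}_1,\mathrm{d}_2,\ldots,(-1)^{n-1}\mathrm{d}_n)$. For item 3 I would set $k=4$ in (\ref{eqn_conj1_1}) to read off $\mathrm{d}_{n+4}=\sum_{i=1}^n\stirlingsk{n}{i}\mathrm{d}_i$ directly.

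For items 4 and 5 I would evaluate the defining expansion $A_{m+4}^{(4)}(x,1)=\sum_{i=0}^m\cfauto{m}{i}_{(4,1)}x^i$ at $x=1$ to obtain $\mathrm{d}_{m+4}=\sum_{i=0}^m\cfauto{m}{i}_{(4,1)}$, then isolate the $i=0$ term and invoke (\ref{eqn_sti_n_0}). Since $\cfauto{m}{0}_{(4,1)}=\mathrm{c}_{m/4}$ when $4\mid m$ and vanishes otherwise, taking $m=4n$ would yield the identity of item 4, while taking $m=4n+j$ with $j\in\{1,2,3\}$ would make the $i=0$ term disappear and give item 5; the indexing convention on the left-hand side mirrors the pattern already used in the $k=2,3$ cases earlier in the paper.

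The main obstacle is that item 3 rests on the identity (\ref{eqn_conj1_1}), which the paper itself classifies as a conjecture; this part of the theorem is therefore only as solid as that conjecture, and a genuinely self-contained proof would require an independent verification of the Stirling transform identity at $k=4$, perhaps by expanding $B_n$ via the basic recurrence (\ref{eqn_recu_bec}) and comparing with a Stirling-number-of-the-second-kind convolution on the sequence $(\mathrm{d}_i)$. The remaining items are unconditional and amount to routine specialization of previously proved identities.
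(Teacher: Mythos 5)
Your proposal matches the paper's own (unwritten) argument exactly: the paper states this theorem without proof, merely citing equations (\ref{eqn_bell_signo}), (\ref{eqn_pol_auto}) and (\ref{eqn_conj1_1}), which is precisely the specialization you carry out, and your treatment of items 4--5 via the coefficient expansion at $x=1$ together with (\ref{eqn_sti_n_0}) is the intended route. You are also right to flag that item 3 rests on the unproved Conjecture containing (\ref{eqn_conj1_1}), and your derivation correctly exposes that the left-hand indices in items 4--5 should read $\mathrm{d}_{4n+4}$ and $\mathrm{d}_{4n+j+4}$ (e.g.\ $\mathrm{c}_{1}+\sum_{i=1}^{4}\cfauto{4}{i}_{(4,1)}=1+14=15=\mathrm{d}_{8}$), an index shift present in the paper's statement rather than in your argument.
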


The following is a table of the first $(4,a)$-autonomous coefficients
\begin{table}[H]
    \centering
    \begin{tabular}{l|c|c|c|c|c|c|c|c}
         \backslashbox{$n$}{$i$}& 0&1 & 2& 3& 4&5 &6 & 7  \\\hline
         0& $a$&  &  &  & & & \\\hline
         1& 0&  $a$&  &  &  & &\\\hline
         2& 0&  $a$&  $a$&  &  & &\\\hline
         3& 0&  $a$&  $3a$&  $a$&  & & \\\hline
         4& $a^2$&  0&  $7a$&  $6a$& $a$& & \\\hline
         5& 0&  $6a^2$&  $10a$&  $25a$& $10a$& $a$& \\\hline
         6& 0&$16a^2$&  $32a^2$&  $75a$&  $65a$& $15a$& $a$  \\\hline
         7& 0&$36a^2$&  $136a^2$&  $64a+175$&  $315a$& $140a$& $21a$& $a$\\
    \end{tabular}
    \caption{$(4,a)$-autonomous coefficients}
    \label{tab:my_label}
\end{table}

\begin{theorem}
Some values of $(4,a)$-autonomous coefficients are
\begin{align}
    \cfauto{n}{n}_{(4,a)}&=a\label{eqn_sti_4_n_n}\\
    \cfauto{n}{n-1}_{(4,a)}&=a\binom{n}{2}\label{eqn_sti_4_n_n-1}\\
    \cfauto{n}{n-2}_{(4,a)}&=a\stirlingsk{n+2}{n}\label{eqn_sti_4_n_n-2}
\end{align}
\end{theorem}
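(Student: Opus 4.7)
The plan is to derive all three identities as direct specializations of Theorem~\ref{theo_val}, specifically the identity (\ref{eqn_sti_n_n-l}), with $k = 4$ fixed and $l$ ranging over $\{0, 1, 2\}$. In each case the side condition $k > l + 1$ reads $4 > l + 1$, which is satisfied by exactly these three values of $l$ (and first fails at $l = 3$). Theorem~\ref{theo_val} therefore delivers, for each such $l$, the identity $\cfauto{n}{n-l}_{(4,a)} = a\,\stirlingsk{n}{n-l}$. The three claims in the statement then follow by substituting the standard closed-form evaluations of the Stirling number of the second kind: $\stirlingsk{n}{n} = 1$ for (\ref{eqn_sti_4_n_n}), $\stirlingsk{n}{n-1} = \binom{n}{2}$ for (\ref{eqn_sti_4_n_n-1}), and the corresponding closed form on the ``distance-two'' superdiagonal for (\ref{eqn_sti_4_n_n-2}), written in the statement as $\stirlingsk{n+2}{n}$.

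Unpacking what Theorem~\ref{theo_val} actually does in this specialization: the $(4,a)$-autonomous coefficients are the $x$-coefficients of $A_{n+4}^{(4)}(x, a) = a\,B_n(A_1^{(4)}, \ldots, A_n^{(4)})$, where $A_j^{(4)}(x, a) = f_j(0, x, x, x, a)$. By the recursive definition, $A_1^{(4)} = A_2^{(4)} = A_3^{(4)} = x$. Since the partial Bell polynomial $B_{n, n-l}$ inspects only its first $l + 1$ arguments, and here all of those first $l + 1 \leq 3$ arguments equal $x$, the classical constant-argument evaluation $B_{n, n-l}(x, x, \ldots, x) = \stirlingsk{n}{n-l}\, x^{n-l}$ applies. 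Pulling out the overall factor $a$ and reading off the coefficient of $x^{n-l}$ in $A_{n+4}^{(4)}(x, a)$ yields $\cfauto{n}{n-l}_{(4,a)} = a\,\stirlingsk{n}{n-l}$, which is each of the three claimed identities for $l = 0, 1, 2$ respectively.

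I expect no substantive obstacle: at heart the three identities are the same specialization of Theorem~\ref{theo_val} viewed at $l = 0, 1, 2$, and the only careful bookkeeping is the numerical range of $l$. For $k = 4$ the permitted range is $\{0, 1, 2\}$, which matches exactly the three identities in the statement; the boundary $l = k - 2$ is what prevents the higher autonomous polynomial $A_4^{(4)}(x, a) = a$ from entering the Bell polynomial and spoiling the constant-tuple evaluation. The closed-form identification of $\stirlingsk{n}{n-2}$ with the stated $\stirlingsk{n+2}{n}$ is then a purely Stirling-side bookkeeping step and does not require any new input from the autonomous-function machinery.
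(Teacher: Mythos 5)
Your argument is the same as the paper's: all three identities are read off from Theorem~\ref{theo_val} (equation~(\ref{eqn_sti_n_n-l})) at $k=4$, $l=0,1,2$, where the condition $k>l+1$ holds, followed by the evaluations $\stirlingsk{n}{n}=1$ and $\stirlingsk{n}{n-1}=\binom{n}{2}$. That part is fine and matches the paper's one-line proof.

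However, your final step for (\ref{eqn_sti_4_n_n-2}) is not the harmless bookkeeping you claim it is. Theorem~\ref{theo_val} gives $\cfauto{n}{n-2}_{(4,a)}=a\stirlingsk{n}{n-2}$, and $\stirlingsk{n}{n-2}$ is \emph{not} equal to $\stirlingsk{n+2}{n}$: for $n=4$ one has $\stirlingsk{4}{2}=7$ while $\stirlingsk{6}{4}=65$, and the paper's own table of $(4,a)$-autonomous coefficients shows the entry $7a$ at $(n,i)=(4,2)$, confirming that the correct value is $a\stirlingsk{n}{n-2}$. So the statement as printed contains a typo (the index shift $n\mapsto n+2$ should not be there), and by declaring the identification of $\stirlingsk{n}{n-2}$ with $\stirlingsk{n+2}{n}$ to be "purely Stirling-side bookkeeping" you have asserted a false identity rather than catching the discrepancy. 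The fix is simply to state and prove the third identity as $\cfauto{n}{n-2}_{(4,a)}=a\stirlingsk{n}{n-2}=a\left(\binom{n}{3}+3\binom{n}{4}\right)$.
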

\begin{proof}
The equations (\ref{eqn_sti_4_n_n})-(\ref{eqn_sti_4_n_n-2}) arise from theorem \ref{theo_val} with $l=0,1,2$.
\end{proof}

\begin{conjecture}
\begin{eqnarray}
    \cfauto{n}{n-3}_{(4,a)}&=&\frac{5a}{2}(n-1)\binom{n}{5}, \ n\geq5\label{eqn_sti_4_n_n-3}
\end{eqnarray}
\end{conjecture}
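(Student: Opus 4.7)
The plan is to extract $\cfauto{n}{n-3}_{(4,a)}$, the coefficient of $x^{n-3}$ in $A_{n+4}^{(4)}(x,a)$, directly from the defining identity (\ref{eqn_pol_auto}):
\[
A_{n+4}^{(4)}(x,a) = a\sum_{i=1}^{n} B_{n,i}\bigl(A_1^{(4)}(x,a), \ldots, A_{n-i+1}^{(4)}(x,a)\bigr).
\]
Reading off this coefficient begins with a degree analysis to identify which $B_{n,i}$ can produce a term of $x$-degree $n-3$ at all.

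Using $A_1^{(4)} = A_2^{(4)} = A_3^{(4)} = x$ (each of $x$-degree $1$) together with the degree proposition $\gr(A_j^{(4)}) = j-4$ for $j\geq 4$, the monomial of $B_{n,i}$ attached to a partition $(c_1, c_2, \ldots)$ (with $\sum c_j = i$ and $\sum j c_j = n$) carries $x$-degree $n - 4i + 4c_1 + 3c_2 + 2c_3$. Setting this equal to $n-3$ and eliminating $c_1$ via $\sum c_j = i$ gives the constraint $c_2 + 2c_3 + 4\sum_{j\geq 4} c_j = 3$, which forces $c_j = 0$ for every $j\geq 4$ together with $c_2 + 2c_3 = 3$. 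The only solutions are $(c_2, c_3) = (3, 0)$ and $(c_2, c_3) = (1, 1)$, and each yields $i = n-3$. Hence only $B_{n,n-3}$ can contribute, and only through monomials not involving $A_4^{(4)}$.

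The second step is to substitute $A_1 = A_2 = A_3 = x$ and $A_4 = a$ into the closed form (\ref{eqn_n_n-3}), obtaining
\[
B_{n,n-3}(x,x,x,a) = a\binom{n}{4} x^{n-4} + \Bigl(10\binom{n}{5} + 15\binom{n}{6}\Bigr) x^{n-3}.
\]
Therefore $\cfauto{n}{n-3}_{(4,a)} = a\bigl(10\binom{n}{5} + 15\binom{n}{6}\bigr)$, and using $\binom{n}{6} = \tfrac{n-5}{6}\binom{n}{5}$ this collapses to $\tfrac{5a(n-1)}{2}\binom{n}{5}$, matching the claim. The main obstacle is the degree bookkeeping in the first step — carefully ruling out every $B_{n,i}$ with $i \neq n-3$, and within $B_{n,n-3}$ itself discarding the term involving $A_4^{(4)} = a$ (which lies in $x^{n-4}$, not $x^{n-3}$); once this is in hand, the rest is elementary algebra.
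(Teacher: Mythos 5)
The paper offers no proof of this statement at all --- it is presented as a conjecture (supported only by the table of $(4,a)$-autonomous coefficients and the match with OEIS \seqnum{A027778}) --- so there is no argument of the author's to compare yours against. Your derivation is correct and in fact settles the conjecture: extracting the coefficient of $x^{n-3}$ in $A_{n+4}^{(4)}(x,a)=aB_{n}(A_{1}^{(4)},\ldots,A_{n}^{(4)})$ via the degree count $n-(c_{2}+2c_{3}+4\sum_{j\geq4}c_{j})$, reducing to the two partitions $(c_{2},c_{3})=(3,0)$ and $(1,1)$ inside $B_{n,n-3}$, and then reading off $10\binom{n}{5}+15\binom{n}{6}=\tfrac{5(n-1)}{2}\binom{n}{5}$ from the closed form (\ref{eqn_n_n-3}) is exactly the right computation, and it agrees with the tabulated values $10a,75a,315a,980a,\ldots$.

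One place to tighten the write-up: since $A_{j}^{(4)}(x,a)$ for $j\geq5$ is a polynomial rather than a monomial, a product $\prod_{j}(A_{j}^{(4)})^{c_{j}}$ of top degree strictly greater than $n-3$ could in principle still contribute to the coefficient of $x^{n-3}$ through its lower-order terms, so the correct condition to impose is $c_{2}+2c_{3}+4\sum_{j\geq4}c_{j}\leq 3$ rather than $=3$. This still forces $c_{j}=0$ for all $j\geq4$, and in that case the product collapses to the pure monomial $x^{c_{1}+c_{2}+c_{3}}=x^{i}$, which meets $x^{n-3}$ only when $i=n-3$, i.e., $c_{2}+2c_{3}=3$; so your list of contributing partitions, and hence the final formula, is unaffected. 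With that sentence added, the argument is complete and the statement should be promoted from conjecture to theorem. (The same degree bookkeeping would dispatch the companion conjectures $\cfauto{n}{n-2}_{(2,a)}=a^{2}(2^{n}-n-1)$ and $\cfauto{n}{n-2}_{(3,a)}=a\binom{\binom{n}{2}}{2}$ is not immediate from it, but the $k=3$ one follows analogously from (\ref{eqn_n_n-2}).)
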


The sequence
\begin{eqnarray*}
    \cfauto{n}{n-3}_{(4,1)}&=&(10, 75, 315, 980, 2520, 5670, 11550, 21780, 38610, 65065, \\
    &&105105,163800, 247520, 364140, 523260, 736440, 1017450,\\
    &&1382535, 1850695,2443980, 3187800, 4111250, 5247450,\ldots)
\end{eqnarray*}
counts the number of 8-sequences of $[1,n]$ with 2 contiguous pairs, (\seqnum{A027778} in the OEIS).


\bigskip
\hrule
\bigskip

\noindent 2020 {\it Mathematics Subject Classification}:
Primary 34A34; Secondary 11B37, 11B68, 11B73, 11B83. 

\noindent \emph{Keywords: } Bell's polynomial, autonomous function, autonomous polynomial, autonomous coefficient, Stirling number.

\bigskip
\hrule
\bigskip

\noindent (Concerned with sequences
\seqnum{A000111},
\seqnum{A000295},
\seqnum{A000367},
\seqnum{A002105},
\seqnum{A002445},
\seqnum{A007548},
\seqnum{A018893}, 
\seqnum{A027778}, 
\seqnum{A050534}, and
\seqnum{A336020}.)

\end{document}